\documentclass[preprint,12pt]{elsarticle}

\usepackage[english]{babel}
\usepackage{graphicx,epstopdf,epsfig}
\usepackage{amsfonts,epsfig,fancyhdr,graphics, hyperref,amsmath,amssymb}
\usepackage{amsthm}
\newtheorem{theorem}{Theorem}[section]
\newtheorem{lemma}[theorem]{Lemma}
\newtheorem{corollary}[theorem]{Corollary}
\newtheorem{proposition}[theorem]{Proposition}
\newtheorem{definition}[theorem]{Definition}

\usepackage{xcolor}

\journal{Linear Algebra and its Applications}

\begin{document}

\begin{frontmatter}



\title{A Family of Completely PPT Maps}


\author[label1]{Kennett L. Dela Rosa}
\author[label2]{Iana Angela C. Fajardo}
\author[label1]{Cedie B. Reyes}

\affiliation[label1]{organization={Institute of Mathematics, University of the Philippines Diliman},
            city={Quezon City},
            postcode={1101}, 
            state={NCR},
            country={Philippines}}

\affiliation[label2]{organization={Graduate School of Science and Technology, Sophia University},
addressline={7-1 Kioi-cho, Chiyoda-ku, Tokyo},
postcode={102-8554},
country={Japan}}

\begin{abstract}
Given $\alpha\in \mathbb R$ and $ B_1,\ldots,B_q\in M_{m,n}$, consider the map
\[\Phi^{\pm}_{\alpha,\mathfrak{B}}(X)=\alpha \textup{tr}(X) I_m\pm \sum_{s=1}^qB_sXB_s^*\ \textup{for all}\ X\in M_n.\] In this study, necessary and sufficient conditions on $\alpha $ and $B_1,\ldots,B_q$ are given that guarantee $\Phi_{\alpha,\mathfrak{B}}^\pm$ is $k$-positive, $k$-copositive, and $k$-PPT, respectively.
As application, known results about the map $\alpha \mbox{tr}(X)I_n\pm X$ are recovered. Furthermore, singular value inequalities associated with these positivity constraints are considered.

\end{abstract}


\begin{keyword} Positive maps \sep Copositive maps \sep PPT maps \sep Entanglement 


\MSC[2020] 15A45 \sep 15A60 \sep 15B99 \sep 47A12 \sep 47A20

\end{keyword}

\end{frontmatter}


\section{Introduction}
\label{}
Completely positive maps were introduced in \cite{stinespring} in connection with operator dilations. Considerable attention has also been devoted to finding positive maps that are not completely positive, as such maps can be used to detect entangled states. On the other hand, there is also interest in studying completely copositive maps as they are related to the notion of decomposable maps. For works related to these ideas, see for instance  \cite{alberti,chefles,cho,chru1,chru2,chru3,chru4,eom,horodecki,hou2,huang,labuschagne,qi,sengupta1,sengupta2}.

The map $\Phi(X)=(m-1)\textup{tr}(X)I_m-X$ is the classical example of an $(m-1)$-positive map that is not $m$-positive \cite{choi}. In 2014, Lin showed that the map $\Phi^+(X)=\textup{tr}(X)I_m+X$ is completely PPT \cite{lin2014}. Lin noted that $\Phi^+$ is known to be completely positive, and the main effort was to show its complete copositivity. To do this, Lin used Schur complement techniques and matrix inequalities. In 2016, Lin considered a different map $\Phi^-(X)=\textup{tr}(X)I_m-X$ \cite{lin2016}. He used the Choi matrix to prove that $\Phi^-$ is completely copositive. In addition, Lin remarked that the copositivity of $\Phi^-$ may be proved using techniques from \cite{lin2014} (see also \cite{yietal} for some applications of $\Phi^+$ and $\Phi^-$). In 2019, Zhang showed that $\Phi(X)=\min\{k,m\}\textup{tr}(X)I_m-X$ is $k$-PPT \cite{pzhang}. In 2020, Fu, Lau, and Tam gave conditions equivalent to $\Phi^{\pm}_\alpha(X)=\alpha\textup{tr}(X)I_m\pm X$ being completely PPT  \cite{fulautam}. Moreover, singular value inequalities associated to $\Phi^\pm_{\alpha}$ were also shown. 

Motivated by Lin's remark in \cite{lin2014} about the rarity of PPT maps, we are interested in generalizing the results in the aforementioned papers for a map of the form
\[\Phi^{\pm}_{\alpha,\mathfrak{B}}(X)=\alpha \textup{tr}(X) I_m\pm \sum_{s=1}^qB_sXB_s^*\ \textup{for all}\ X\in M_n,\]
where $\alpha\in \mathbb R$ and $ B_1,\ldots,B_q\in M_{m,n}$. In Section \ref{main}, conditions are given equivalent to $\Phi_{\alpha,\mathfrak{B}}^\pm$ being $k$-positive, $k$-copositive, and $k$-PPT, respectively. In Section \ref{svi}, singular value inequalities associated to $\Phi_{\alpha,\mathfrak{B}}^\pm$ are explored.

\section{Preliminaries}

Given complex Hilbert spaces ${\cal H}$ and ${\cal K}$, let $B({\cal H}, {\cal K})$ denote the set of all bounded linear operators from ${\cal H}$ to ${\cal K}$. Set $B({\cal H}):=B({\cal H},{\cal H})$. If ${\cal H} $ has dimension $n$, it is identified with the set $\mathbb{C}^n$ of complex $n$-vectors. The set $B({\cal H}, {\cal K})$ is identified with the set $M_{m,n}$ of $m$-by-$n$ complex matrices when ${\cal H}$ and ${\cal K}$ have dimensions $n$ and $m$, respectively; set $M_n:=M_{n,n}$. The set of all positive semidefinite operators in $B({\cal H})$ is denoted by $B({\cal H})^+$.


Throughout the paper, $E_{ij}\in M_k$ denotes the standard basis matrix for $M_k$ (we write $E_{ij}^{(k)}$ to emphasize that the matrix is in $M_k$), i.e., the matrix in $M_k$ whose $(i,j)$-entry is $1$ and $0$ otherwise. Any $A\in M_{k}(B({\cal H}))$ can be written as $A=\sum_{i,j=1}^kE_{ij}\otimes A_{ij}=[A_{ij}]_{i,j=1}^k$ where $A_{ij}\in B({\cal H})$. Due to the underlying tensor structure, one natural way to define a \textit{partial transpose} of $A$ is $A^\tau=\displaystyle\sum_{i,j=1}^k E_{ij}^\top \otimes A_{ij}=[A_{ji}]_{i,j=1}^k$. If ${\cal H}$ has dimension $n$, then $A_{ij}\in M_n$ and another \textit{partial transpose} of $A$ is given by $A^\Gamma=\displaystyle\sum_{i,j=1}^k E_{ij}\otimes A_{ij}^\top=[A_{ij}^\top]_{i,j=1}^k$. When $k=1$, $A^\tau=A$ and $A^\Gamma=A^\top$. Observe that $A^\Gamma=(A^\tau)^\top$, and hence $A^\tau\in M_k(M_n)^+ $ if and only if $A^\Gamma\in M_k(M_n)^+$. For a general $X\in M_{m,n}$, let $\textup{vec}(X)=\begin{bmatrix}x_1\\ \vdots \\ x_n\end{bmatrix}$ where $x_1,\ldots,x_n\in \mathbb{C}^m$ are the columns of $X$; this defines a linear isometry $\textup{vec}:M_{m,n}\to \mathbb{C}^{mn}$ where $M_{m,n}$ is equipped with the Frobenius norm $\|X\|=\sqrt{\textup{tr}(X^*X)}$ while the codomain is equipped with the Euclidean norm $\|x\|=\sqrt{x^*x}$. A useful formula about the $\textup{vec}$ map is given by $\textup{vec}(AXB)=(B^\top \otimes A)\textup{vec}(X)$ \cite[Lemma 4.3.1]{HJ2}.

Detecting entanglement in quantum information theory motivates consideration of a special class of operators. The PPT Criterion or the Peres-Horodecki Criterion uses PPT matrices to detect entanglement.


\begin{definition}
An operator $A\in M_k(B({\cal H}))$ is said to be \textit{positive partial transpose (PPT)} if $A\in M_k(B({\cal H}))^+$ and $A^\tau\in M_k(B({\cal H}))^+$.
\end{definition}

A linear map $\Phi: B({\cal H})\to B({\cal K})$ is \textit{positive} if $\Phi(A)\in B({\cal K})^+$ whenever $A\in B({\cal H})^+$. Given $k\in \mathbb{N}$, define $I_k\otimes \Phi: M_k(B({\cal H}))\to M_k(B({\cal K}))$  by $(I_k\otimes \Phi)([A_{ij}]_{i,j=1}^k)=[\Phi(A_{ij})]_{i,j=1}^k$ where $[A_{ij}]_{i,j=1}^k\in M_k(B({\cal H}))$. We say that $\Phi$ is $k$\textit{-positive} if $I_k\otimes \Phi$ is positive. If $\Phi$ is $k$-positive for all $k\in \mathbb{N}$, then $\Phi$ is said to be \textit{completely positive}. 



Let $\mathbf{T}: M_n\to M_n$ denote the transpose operator on $M_n$, i.e., $\mathbf{T}(A)=A^\top$ for all $A\in M_n$. Following \cite{fulautam}, we consider the following maps.

\begin{definition}
Let $\Phi: M_n\to B({\cal K})$ be linear.
\begin{enumerate} [(i)]
\item Given  $k\in \mathbb{N}$, $\Phi$ is said to be $k$\textit{-copositive} if $\Phi\circ \mathbf{T}$ is $k$-positive.
\item The map $\Phi$ is said to be \textit{completely copositive} if $\Phi\circ \mathbf{T}$ is completely positive. 
\end{enumerate}
\end{definition}

\begin{definition}
Let $\Phi: M_n\to B({\cal K})$ be linear.
\begin{enumerate}[(i)] 
\item Given  $k\in \mathbb{N}$, $\Phi$ is said to be $k$\textit{-PPT} if $(I_k\otimes \Phi)(A)$ is PPT whenever $A\in  M_k(B({\cal H}))^+$.
\item The map $\Phi$ is said to be \textit{completely PPT} if $\Phi$ is $k$-PPT for all $k\in \mathbb{N}$. 
\end{enumerate}
\end{definition}

For simplicity, $1$-copositive and $1$-PPT maps are referred as copositive and PPT maps, respectively.
The proof of the next result closely follows that of \cite[Lemma 1]{fulautam}.

\begin{proposition} \label{equiv}
Let $\Phi: M_n\to B({\cal K})$ be linear. Given $k\in \mathbb{N}$, $\Phi$ is $k$-PPT if and only if $\Phi$ is $k$-positive and $k$-copositive. Consequently, $\Phi$ is completely PPT if and only if $\Phi$ is completely positive and completely copositive.
\end{proposition}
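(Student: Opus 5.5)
The plan is to unwind the definitions and reduce everything to one identity relating $I_k\otimes(\Phi\circ\mathbf{T})$ to the partial transpose $\tau$. Here $\Phi:M_n\to B(\mathcal K)$, so the inputs are $A=[A_{ij}]_{i,j=1}^k\in M_k(M_n)$. By definition, $\Phi$ is $k$-PPT exactly when, for every $A\in M_k(M_n)^+$, two things hold: $(I_k\otimes\Phi)(A)\ge 0$ and $\big((I_k\otimes\Phi)(A)\big)^\tau\ge 0$. The first condition, taken over all such $A$, is precisely $k$-positivity of $\Phi$. So it remains to show that the second condition, over all $A\ge 0$, is equivalent to $k$-positivity of $\Phi\circ\mathbf T$, i.e.\ to $k$-copositivity.

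The key identity is
\[
\big((I_k\otimes\Phi)(A)\big)^\tau=[\Phi(A_{ji})]_{i,j=1}^k=(I_k\otimes(\Phi\circ\mathbf T))\big([A_{ji}^\top]_{i,j=1}^k\big)=(I_k\otimes(\Phi\circ\mathbf T))(A^\top),
\]
where $A^\top$ is the full transpose of $A\in M_{kn}$. Its $(i,j)$ block is $A_{ji}^\top$, and this is the same as $(A^\tau)^\Gamma$, consistent with the observation $A^\Gamma=(A^\tau)^\top$ in the preliminaries.

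The main point to check is that the full transpose is a bijection of $M_k(M_n)^+=M_{kn}^+$ onto itself. This holds because $A^\top=\overline{A}$ for Hermitian $A$, and $x^*\overline{A}x=\overline{\bar x^*A\bar x}\ge 0$. Granting this, as $A$ ranges over $M_k(M_n)^+$, so does $A^\top$. Hence $\big((I_k\otimes\Phi)(A)\big)^\tau\ge0$ for all $A\ge0$ if and only if $(I_k\otimes(\Phi\circ\mathbf T))(B)\ge0$ for all $B\ge0$, which is exactly the $k$-positivity of $\Phi\circ\mathbf T$. The verification is just bookkeeping, but some care is needed: using the partial transpose $\tau$ on the outer blocks, rather than $\Gamma$ or a full transpose, is what makes the blocks $\Phi(A_{ji})$ line up correctly.

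Combining the two conditions gives the equivalence: $\Phi$ is $k$-PPT if and only if it is $k$-positive and $k$-copositive. The statement about complete PPT follows by quantifying over all $k\in\mathbb N$. Completely PPT means $k$-PPT for all $k$, which by the first part means $k$-positive and $k$-copositive for all $k$. That is the same as $\Phi$ and $\Phi\circ\mathbf T$ both being completely positive, i.e.\ $\Phi$ completely positive and completely copositive.
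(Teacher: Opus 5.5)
Your proof is correct and takes essentially the approach the paper intends. The paper gives no proof of its own; it defers to the direct definitional argument of Fu--Lau--Tam (Lemma 1), which rests on the identity $\big((I_k\otimes\Phi)(A)\big)^\tau=(I_k\otimes(\Phi\circ\mathbf{T}))(A^\top)$ together with the fact that the full transpose maps $M_k(M_n)^+$ bijectively onto itself, and the completely PPT statement then follows by quantifying over all $k$.
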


Suppose $\Phi: M_n\to B({\cal K})$ is linear. Its \textit{Choi matrix} $C(\Phi)$ is the operator matrix given by $C(\Phi)=[\Phi(E_{ij})]_{i,j=1}^n$. The Choi matrix can be used to characterize $k$-positivity of $\Phi$ (see \cite[Proposition 2.2]{hou1}). Now, observe that $C(\Phi\circ\mathbf{T})=[(\Phi\circ \mathbf{T})(E_{ij})]_{i,j=1}^n=[\Phi(E_{ji})]_{i,j=1}^n=C(\Phi)^\tau$. Since $k$-copositivity is defined in terms of the $k$-positivity of $\Phi\circ \mathbf{T}$, the next result follows from \cite[Proposition 2.2]{hou1}.

\begin{proposition} \label{k-cop}
Let $\Phi: M_n\to B({\cal K})$ be linear and $1\leq k\leq n$. The following are equivalent:
\begin{enumerate}[(i)]
\item $\Phi$ is $k$-copositive;
\item $\langle C(\Phi)^\tau x,x\rangle \geq 0$ for all $x=\displaystyle\sum_{p=1}^k y_p\otimes z_p$ with $y_p\otimes z_p\in \mathbb{C}^n\otimes {\cal K}$;
\item $(I_n\otimes P) C(\Phi)^\tau (I_n\otimes P)\in M_n(B({\cal K}))^+ $ for all rank $k$ orthogonal projection $P\in B({\cal K})$.
\end{enumerate}

\end{proposition}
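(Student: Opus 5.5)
The plan is to reduce the statement to the Choi-matrix characterization of $k$-positivity in \cite[Proposition 2.2]{hou1}, applied to the composed map $\Phi\circ\mathbf{T}$. By the definition of $k$-copositivity, $\Phi$ is $k$-copositive exactly when $\Phi\circ\mathbf{T}$ is $k$-positive. Note that $\Phi\circ\mathbf{T}:M_n\to B({\cal K})$ is again linear, so \cite[Proposition 2.2]{hou1} applies to it verbatim. It tells us that, for $1\le k\le n$, the following are equivalent:
\begin{enumerate}[(a)]
\item $\Phi\circ\mathbf{T}$ is $k$-positive;
\item $\langle C(\Phi\circ\mathbf{T})x,x\rangle\ge 0$ for every $x=\sum_{p=1}^k y_p\otimes z_p$ with $y_p\in\mathbb{C}^n$, $z_p\in{\cal K}$;
\item $(I_n\otimes P)\,C(\Phi\circ\mathbf{T})\,(I_n\otimes P)\in M_n(B({\cal K}))^+$ for every rank $k$ orthogonal projection $P\in B({\cal K})$.
\end{enumerate}

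The key identity is the one already recorded before the statement,
\[
C(\Phi\circ\mathbf{T})=[(\Phi\circ\mathbf{T})(E_{ij})]_{i,j=1}^n=[\Phi(E_{ij}^\top)]_{i,j=1}^n=[\Phi(E_{ji})]_{i,j=1}^n=C(\Phi)^\tau .
\]
I will verify this entrywise: $E_{ij}^\top=E_{ji}$, and the partial transpose $\tau$ swaps the block indices. Substituting $C(\Phi)^\tau$ for $C(\Phi\circ\mathbf{T})$ in (a)--(c) turns them word for word into (i)--(iii) of the proposition. This gives (i)$\Leftrightarrow$(ii)$\Leftrightarrow$(iii).

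The only step needing care is making sure the cited result is stated in the exact form used here. That means the same range $1\le k\le n$, possibly infinite-dimensional ${\cal K}$, and vectors of Schmidt rank at most $k$ in $\mathbb{C}^n\otimes{\cal K}$. If \cite{hou1} is phrased with the tensor factors in the other order, or with $I_n\otimes P$ placed differently, I would reconcile it with the canonical shuffle unitary. Conjugating by that unitary preserves positivity and maps sums of $k$ simple tensors to sums of $k$ simple tensors.

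If a self-contained argument for (ii)$\Leftrightarrow$(iii) is wanted, it goes as follows.
\begin{enumerate}[(1)]
\item The vectors $x$ in (ii) are exactly the vectors lying in $\mathbb{C}^n\otimes \operatorname{ran}P$ for some projection $P$ of rank at most $k$. Given $x$, take $P$ to be the projection onto a $k$-dimensional subspace containing $z_1,\dots,z_k$; conversely, expanding in an orthonormal basis of $\operatorname{ran}P$ writes any such vector as a sum of $k$ simple tensors.
\item For such $x$ we have $(I_n\otimes P)x=x$, so $\langle C(\Phi)^\tau x,x\rangle=\langle (I_n\otimes P)C(\Phi)^\tau(I_n\otimes P)x,x\rangle$.
\item When $\dim{\cal K}\ge k$, enlarging a lower-rank projection to rank exactly $k$ costs nothing, so it suffices to consider rank $k$ projections.
\end{enumerate}
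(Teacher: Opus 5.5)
Your proposal is correct and follows the paper's argument exactly: apply Hou's Proposition 2.2 to $\Phi\circ\mathbf{T}$ and use the identity $C(\Phi\circ\mathbf{T})=C(\Phi)^\tau$. Your extra direct argument for (ii)$\Leftrightarrow$(iii) is also sound, and the condition $\dim{\cal K}\ge k$ that you flag in step (3) is genuinely needed, since otherwise there are no rank-$k$ projections on ${\cal K}$ and (iii) holds vacuously --- an edge case the paper's statement does not address.
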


\section{Elementary operators}
Consider $\Psi:M_n\to M_m$ defined by 
\begin{equation}\label{elemop}
\Psi(X)=\sum_{r=1}^pA_rXA_r^*-\sum_{s=1}^qB_sXB_s^*
\end{equation}
where $A_1,\ldots, A_p, B_1,\ldots,B_q\in M_{m,n}$. The map $\Psi$ is a special type of \textit{elementary operator} that sends self-adjoint matrices into itself \cite{hou0}. 

\begin{lemma}\label{KoPhi}
Let $\Psi:M_n\to M_m$ have the form \eqref{elemop} with $A_1,\ldots,A_p,B_1,\ldots,$ $B_q\in M_{m,n}$. Suppose $\Phi:M_n\to M_n$ is linear whose matrix representation with respect to $\{E_{11},\ldots,E_{n1},\ldots, E_{1n},\ldots, E_{nn}\}$ is given by $K(\Phi)=[K_{ij}]_{i,j=1}^n\in M_n(M_n)$. Then \[(\Psi\circ \Phi)(X)=\displaystyle\sum_{r=1}^p\displaystyle\sum_{i,j=1}^n A_rK_{ij}XE_{ij}^\top A_r^*-\displaystyle\sum_{s=1}^q\displaystyle\sum_{i,j=1}^nB_sK_{ij}XE_{ij}^\top B_s^*.\]
\end{lemma}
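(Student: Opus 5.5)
The plan is to reduce the statement to the single identity
\[
\Phi(X)=\sum_{i,j=1}^n K_{ij}XE_{ij}^\top \quad \text{for all } X\in M_n .
\]
Once this holds, the lemma follows by substituting $\Phi(X)$ into \eqref{elemop} and expanding. Since $X\mapsto A_rYA_r^*$ is linear in $Y$, we get $A_r\Phi(X)A_r^*=\sum_{i,j}A_rK_{ij}XE_{ij}^\top A_r^*$, and likewise for each $B_s$. Summing over $r$ and $s$ gives exactly the displayed formula for $(\Psi\circ\Phi)(X)$.

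To prove the identity, I would pass to vectorizations. The basis $\{E_{11},\ldots,E_{n1},\ldots,E_{1n},\ldots,E_{nn}\}$ is ordered column by column. Its image under $\textup{vec}$ is therefore the standard basis of $\mathbb{C}^{n^2}$ in its natural order. So saying that $K(\Phi)$ is the matrix representation of $\Phi$ in this basis means precisely that
\[
\textup{vec}(\Phi(X))=K(\Phi)\,\textup{vec}(X) \quad \text{for all } X\in M_n .
\]
Since $\textup{vec}$ is injective, it then suffices to show that $\textup{vec}\bigl(\sum_{i,j}K_{ij}XE_{ij}^\top\bigr)=K(\Phi)\,\textup{vec}(X)$.

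For this I would use the formula $\textup{vec}(AXB)=(B^\top\otimes A)\textup{vec}(X)$ from \cite[Lemma 4.3.1]{HJ2}, with $A=K_{ij}$ and $B=E_{ij}^\top$. This gives $\textup{vec}(K_{ij}XE_{ij}^\top)=(E_{ij}\otimes K_{ij})\,\textup{vec}(X)$. Summing over $i,j$ and using linearity of $\textup{vec}$, we obtain
\[
\textup{vec}\Bigl(\sum_{i,j}K_{ij}XE_{ij}^\top\Bigr)=\Bigl(\sum_{i,j}E_{ij}\otimes K_{ij}\Bigr)\textup{vec}(X)=K(\Phi)\,\textup{vec}(X),
\]
because $\sum_{i,j}E_{ij}\otimes K_{ij}=[K_{ij}]_{i,j=1}^n$ by the block notation fixed in the Preliminaries.

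The only real point of care is the bookkeeping: the block indexing of $K(\Phi)$ must match the column-stacking convention of $\textup{vec}$. The ordering of the basis in the statement is chosen exactly so that the $(i,j)$ block of $K(\Phi)$ sends the $j$-th column of $X$ to a contribution in the $i$-th column of $\Phi(X)$. That is the same action as $K_{ij}XE_{ij}^\top$, which takes the $j$-th column of $X$, multiplies it by $K_{ij}$, and places the result in column $i$. I would verify this on basis elements $X=E_{kl}$ as a sanity check. Beyond this, the argument is routine.
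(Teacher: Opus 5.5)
Your proposal is correct and follows essentially the same route as the paper: both rest on $\textup{vec}(\Phi(X))=K(\Phi)\textup{vec}(X)$, the identity $\textup{vec}(AXB)=(B^\top\otimes A)\textup{vec}(X)$, the decomposition $K(\Phi)=\sum_{i,j}E_{ij}\otimes K_{ij}$, and injectivity of $\textup{vec}$. The only difference is organizational: you first isolate $\Phi(X)=\sum_{i,j}K_{ij}XE_{ij}^\top$ and then apply $\Psi$ by linearity, whereas the paper vectorizes $\Psi\circ\Phi$ directly and uses the mixed-product rule $(\overline{A_r}\otimes A_r)(E_{ij}\otimes K_{ij})=\overline{A_r}E_{ij}\otimes A_rK_{ij}$.
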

\begin{proof}
Consider
\begin{align*}
\textup{vec}&(\Psi\circ \Phi)(X)\\
&=\displaystyle\sum_{r=1}^p\textup{vec}(A_r\Phi(X)A_r^*)-\displaystyle\sum_{s=1}^q \textup{vec}(B_s\Phi(X)B_s^*)\\
&=\displaystyle\sum_{r=1}^p(\overline{A_r}\otimes A_r)\textup{vec}(\Phi(X))-\displaystyle\sum_{s=1}^q (\overline{B_s}\otimes B_s)\textup{vec}(\Phi(X))\\
&=\displaystyle\sum_{r=1}^p(\overline{A_r}\otimes A_r)K(\Phi)\textup{vec}(X)-\displaystyle\sum_{s=1}^q (\overline{B_s}\otimes B_s)K(\Phi)\textup{vec}(X).
\end{align*}

Note that
\begin{align*}
(\overline{A_r}\otimes A_r)K(\Phi)\textup{vec}(X)&=\displaystyle\sum_{i,j=1}^n\overline{A_r}E_{ij}\otimes A_rK_{ij}\textup{vec}(X)\\
&=\displaystyle\sum_{i,j=1}^n\textup{vec}(A_rK_{ij}XE_{ij}^\top A_r^*).
\end{align*}

Similarly,
\[(\overline{B_s}\otimes B_s)K(\Phi)\textup{vec}(X)=\displaystyle\sum_{i,j=1}^n\textup{vec}(B_sK_{ij}XE_{ij}^\top B_s^*).\] 
The claim follows from the injectivity of the linear map $\textup{vec}.$
\end{proof}

A straightforward application of Lemma \ref{KoPhi} yields the following result.

\begin{theorem}
Let $\Psi:M_n\to M_m$ have the form \eqref{elemop} with $A_1,\ldots,A_p,$ $B_1,\ldots, B_q\in M_{m,n}$. The following are equivalent:
\begin{enumerate}[(i)]
\item $\Psi$ is $k$-copositive;
\item $\displaystyle\sum_{r=1}^p (I_k\otimes A_r)P^\Gamma (I_k\otimes A_r^*)-\displaystyle\sum_{s=1}^q (I_k\otimes B_s)P^\Gamma (I_k\otimes B_s^*)\in M_k(M_m)^+$ for all rank $1$ orthogonal projection $P\in M_k(M_n)$;
\item For all (orthonormal) subset $\{x_1,\ldots,x_k\}\subset \mathbb{C}^n$,
\[\displaystyle\sum_{r=1}^p\sum_{i,j=1}^kE_{ij}\otimes A_r(x_ix_j^*)^\top A_r^*-\displaystyle\sum_{s=1}^q\sum_{i,j=1}^kE_{ij}\otimes B_s(x_ix_j^*)^\top B_s^*\in M_k(M_m)^+.\]

\end{enumerate}
\end{theorem}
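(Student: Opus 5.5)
We will use the definition directly: $\Psi$ is $k$-copositive exactly when $\Psi\circ\mathbf{T}$ is $k$-positive, i.e. when $(I_k\otimes(\Psi\circ\mathbf{T}))(A)\in M_k(M_m)^+$ for every $A\in M_k(M_n)^+$. The first step is to compute $\Psi\circ\mathbf{T}$ in the elementary-operator form by applying Lemma \ref{KoPhi} with $\Phi=\mathbf{T}$. With respect to the ordered basis $\{E_{11},\ldots,E_{n1},\ldots,E_{1n},\ldots,E_{nn}\}$, the transpose has matrix $K(\mathbf{T})=[E_{ji}]_{i,j=1}^n$ (the commutation matrix), so $K_{ij}=E_{ji}$. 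Lemma \ref{KoPhi} then gives
\[(\Psi\circ\mathbf{T})(X)=\sum_{r=1}^p A_r\Big(\sum_{i,j}E_{ji}XE_{ji}\Big)A_r^*-\sum_{s=1}^q B_s\Big(\sum_{i,j}E_{ji}XE_{ji}\Big)B_s^*,\]
and since $\sum_{i,j}E_{ji}XE_{ji}=X^\top$, this is just $\sum_r A_rX^\top A_r^*-\sum_s B_sX^\top B_s^*$. One could check this identity directly, but invoking the lemma is the intended route.

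Next, apply $I_k\otimes(\Psi\circ\mathbf{T})$ to a block matrix $A=[A_{ij}]\in M_k(M_n)$:
\[[\Psi(A_{ij}^\top)]_{i,j}=\sum_r (I_k\otimes A_r)A^\Gamma(I_k\otimes A_r^*)-\sum_s(I_k\otimes B_s)A^\Gamma(I_k\otimes B_s^*).\]
Hence (i) holds iff this expression is positive semidefinite for all $A\in M_k(M_n)^+$. The map $A\mapsto$ (this expression) is linear, and every positive semidefinite $A$ is a nonnegative combination of rank one orthogonal projections by the spectral theorem. So it suffices to test rank one projections, which gives (i)$\Leftrightarrow$(ii).

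For (ii)$\Leftrightarrow$(iii), write a unit vector $v\in\mathbb{C}^k\otimes\mathbb{C}^n$ as $v=\sum_{i=1}^k e_i\otimes x_i$. Then $P=vv^*=\sum_{i,j}E_{ij}\otimes x_ix_j^*$, so $P^\Gamma=\sum_{i,j}E_{ij}\otimes(x_ix_j^*)^\top$. Substituting into (ii) gives exactly the expression in (iii). The normalization $\sum\|x_i\|^2=1$ is irrelevant by homogeneity, so (ii) is equivalent to (iii) for arbitrary $x_1,\ldots,x_k$.

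The main obstacle is the parenthetical claim that \emph{orthonormal} sets suffice. My plan is to reduce a general tuple to an orthogonal one using the local unitary invariance of positivity. Let $X=[x_1\cdots x_k]\in M_{n,k}$ and take the singular value decomposition $X=U\Sigma V^*$. Then $v=\sum_i e_i\otimes x_i$ corresponds to $\textup{vec}(X)$ up to ordering of tensor factors. Replacing $v$ by $(W\otimes I)v$ with $W$ unitary in $M_k$ replaces $x_i$ by $\sum_l w_{il}x_l$. It also conjugates the expression in (iii) by $\overline{W}\otimes I_m$ or $W\otimes I_m$, because the $E_{ij}$ factor is untouched by the partial transpose on the second factor, and so positivity is preserved. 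Choosing $W$ appropriately makes the new $x_i$ mutually orthogonal, with norms given by the singular values. Positivity is scale-invariant as a whole but not under separate rescalings of the $x_i$. To rescale individual $x_i$, I would conjugate by $D\otimes I_m$ with $D$ diagonal and invertible, which preserves positive semidefiniteness. Zero vectors would be handled by a limiting or continuity argument, or by treating them as belonging to orthonormal sets of smaller size padded with zero blocks. I expect this normalization bookkeeping to be the only delicate part.
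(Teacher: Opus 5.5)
Your route is the one the paper intends, and the paper gives no detail beyond ``a straightforward application of Lemma~\ref{KoPhi}.'' Lemma~\ref{KoPhi} with $\Phi=\mathbf{T}$ (indeed $K_{ij}=E_{ji}$) gives $(\Psi\circ\mathbf{T})(X)=\sum_r A_rX^\top A_r^*-\sum_s B_sX^\top B_s^*$. The $k$-positivity of $\Psi\circ\mathbf{T}$ is then tested on rank-one projections $vv^*$ with $v=\sum_i e_i\otimes x_i=\textup{vec}([x_1\ \cdots\ x_k])$. Your arguments for (i)$\Leftrightarrow$(ii)$\Leftrightarrow$(iii) with arbitrary $x_1,\ldots,x_k$ are correct. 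So are the congruences by $W\otimes I_m$ (it is $W$, not $\overline{W}$) and by $D\otimes I_m$.

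The gap is in the step you flagged yourself: the zero columns. Your limiting argument replaces the zero columns of $U\Sigma$ by $\epsilon u_{r+1},\ldots,\epsilon u_k$, and this needs $k\le n$. For $k>n$ it cannot be repaired, because the orthonormal form of (iii) is then false. There is no orthonormal $k$-subset of $\mathbb{C}^n$, so (iii) holds vacuously, while (i) can fail. For example, take $m=n$, $p=0$, $q=1$, $B_1=I_n$, so $\Psi(X)=-X$, which is not even copositive.

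Your padding alternative does not rescue this either. For $k\le n$, positivity for an orthonormal $r$-set with $r<k$ does follow from (iii): extend the set to an orthonormal $k$-set and compress to a principal block. For $k>n$, however, condition (iii) says nothing about smaller sets.

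So you should state that the parenthetical ``orthonormal'' requires $1\le k\le n$, the same restriction as in Proposition~\ref{k-cop}. Under that hypothesis your proof is complete. For $k>n$ the equivalence holds only with arbitrary vectors, or equivalently with orthonormal sets of at most $n$ vectors padded with zero vectors.
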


\section{$k$-positivity and $k$-copositivity}\label{main}

The singular values of $B\in M_{m,n}$ are denoted as $\sigma_1(B)\geq \sigma_2(B)\geq \cdots \geq \sigma_{\min\{m,n\}}(B)\geq 0$. For a Hermitian matrix $H \in M_m,$ denote its eigenvalues as $\lambda_{max}(H) = \lambda_1(H) \geq \lambda_2(H) \geq \cdots \geq \lambda_m(H) = \lambda_{\min}(H)$.

\begin{definition}\label{phi_pm}
Let $\alpha\in \mathbb R$ and $\mathfrak{B}=\begin{bmatrix} B_1& \cdots& B_q\end{bmatrix}$ where $ B_1,\ldots,B_q\in M_{m,n}$. Consider $\Phi^{\pm}_{\alpha,\mathfrak{B}}: M_n\to M_m$ defined by 
\[\Phi^{\pm}_{\alpha,\mathfrak{B}}(X)=\alpha \textup{tr}(X) I_m\pm \sum_{s=1}^qB_sXB_s^*\ \textup{for all}\ X\in M_n.\]
Associated to this map are the matrix $Z=[\textup{vec}(B_1)\ \cdots\ \textup{vec}(B_q)]$ and the function $F(v)=\displaystyle\sum_{s=1}^q v_s B_s$ where $v_1,\ldots, v_q\in \mathbb C$. 
When $q=1$, write $\Phi^{\pm}_{\alpha,B}(X)=\alpha\textup{tr}(X)I_m\pm BXB^*$ for all $X\in M_n$; set $Z=\textup{vec}(B)$ and $F(v)=vB$ where $v\in \mathbb{C}$.
\end{definition}

Observe that the Choi matrix of $\Phi^{\pm}_{\alpha,\mathfrak{B}}$ is given by $\alpha I_{mn}\pm ZZ^*$ and $\textup{vec}(F(v))=Zv$ for all $v\in \mathbb{C}^q.$  

\begin{theorem}\label{beta_pm}
Let $\Phi^{\pm}_{\alpha,\mathfrak{B}}$ be as in Definition \ref{phi_pm}. Define \[\beta_k^\pm=\displaystyle\max_{\substack{V\in M_{m,k}\\ V^*V=I_k}}\{\lambda_{max}((I_n\otimes V)^*(\mp ZZ^*)(I_n\otimes V))\}.\] Then $\Phi^{\pm}_{\alpha,\mathfrak{B}}$ is $k$-positive if and only if $ \alpha\geq \beta_k^\pm.$
\end{theorem}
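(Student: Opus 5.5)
We will use the Choi-matrix characterization of $k$-positivity from \cite[Proposition 2.2]{hou1}, which is the analogue of Proposition \ref{k-cop} without the partial transpose. It says $\Phi$ is $k$-positive if and only if
\[
(I_n\otimes P)\,C(\Phi)\,(I_n\otimes P)\in M_n(M_m)^+
\]
for every rank $k$ orthogonal projection $P\in M_m$. (When $k\ge m$ the only such projection is $P=I_m$; we read the statement with $k\le m$ so that isometries $V\in M_{m,k}$ exist.) As observed after Definition \ref{phi_pm}, $C(\Phi^{\pm}_{\alpha,\mathfrak{B}})=\alpha I_{mn}\pm ZZ^*$, where $I_{mn}=I_n\otimes I_m$ under the block structure $[\Phi(E_{ij})]$. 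The first step is to check this block identification: the $(i,j)$ block of $ZZ^*$ is $\sum_s B_sE_{ij}B_s^*$. This holds because the $i$th block of $\mathrm{vec}(B_s)$ is the $i$th column of $B_s$, namely $B_se_i$.

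Next, parametrize rank $k$ projections as $P=VV^*$ with $V\in M_{m,k}$ and $V^*V=I_k$. Then $I_n\otimes P=(I_n\otimes V)(I_n\otimes V)^*$, and $W:=I_n\otimes V$ is an isometry from $\mathbb{C}^{nk}$ into $\mathbb{C}^{nm}$. Since $W^*W=I$, the condition $W W^* C W W^*\ge 0$ is equivalent to $W^*CW\ge 0$. One direction follows by conjugating with $W^*$ and $W$; the other follows from $WW^*CWW^*=W(W^*CW)W^*$. Now compute
\[
W^*\bigl(\alpha I_{mn}\pm ZZ^*\bigr)W=\alpha I_{nk}-(I_n\otimes V)^*(\mp ZZ^*)(I_n\otimes V).
\]
This matrix is positive semidefinite exactly when $\alpha\ge\lambda_{max}\bigl((I_n\otimes V)^*(\mp ZZ^*)(I_n\otimes V)\bigr)$.

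Requiring this for every isometry $V$ gives the condition $\alpha\ge\sup_V\lambda_{max}(\cdots)$. The supremum is attained, and so is a maximum, because the set of isometries in $M_{m,k}$ is compact and $\lambda_{max}$ is continuous. This yields $\alpha\ge\beta_k^\pm$.

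The only delicate points are bookkeeping rather than real obstacles. We must make sure the Choi-matrix criterion in \cite{hou1} uses the projection on the $\mathcal K=\mathbb{C}^m$ factor, i.e. $I_n\otimes P$ with $P\in M_m$. We must also confirm that the ordering in $\mathrm{vec}$ matches the block ordering of $C(\Phi)$ as $[\Phi(E_{ij})]_{i,j=1}^n$. Both are settled by the block computation in the first step.
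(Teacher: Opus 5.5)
Your proposal is correct and follows the same route as the paper. You apply the Choi-matrix criterion of \cite[Proposition 2.2]{hou1} to $C(\Phi^{\pm}_{\alpha,\mathfrak{B}})=\alpha I_{mn}\pm ZZ^*$, parametrize $P=VV^*$, and reduce to positive semidefiniteness of $\alpha I_{nk}-(I_n\otimes V)^*(\mp ZZ^*)(I_n\otimes V)$; you additionally spell out what the paper compresses into ``upon simplification,'' namely the block check of the Choi matrix, the equivalence of $WW^*CWW^*$ and $W^*CW$ being positive semidefinite, and attainment of the maximum by compactness of the isometries.
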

\begin{proof}
By \cite[Proposition 2.2]{hou1}, $\Phi^{\pm}_{\alpha,\mathfrak{B}}$ is $k$-positive if and only if $(I_n\otimes P)$ $(\alpha I_{mn}\pm ZZ^*)(I_n\otimes P)\in M_{mn}^+$ for all rank $k$ orthogonal projection $P\in M_m. $ By parametrizing $P=VV^*$ where $V\in M_{m,k}$ with $V^*V=I_k$ and upon simplification, we obtain the equivalent condition.
\end{proof}

Note that $\beta_k^+\leq 0 $ and $\beta_k^-\geq 0$. We give further characterization of $\beta_k^\pm$ in terms of $F(v)$.

\begin{theorem}\label{char_beta_pm}
Let $\beta_k^\pm$ be as in Theorem \ref{beta_pm}. Then \[\beta_k^+=-\min_{\substack{W\in M_{m,n}\\ \textup{rank}(W)\leq k\\ \textup{tr}(W^*W)=1}}\max_{\substack{v\in \mathbb C^q\\ \|v\|=1}}|\textup{tr}(F(v)^*W)|^2\] and 
\[\beta_k^-=\max_{\substack{v\in \mathbb C^q\\ \|v\|=1}}\sum_{j=1}^k\sigma_j(F(v)F(v)^*).\]
\end{theorem}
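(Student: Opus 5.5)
The plan is to unpack $\lambda_{max}$ of the compressed matrix as a maximum of a quadratic form, and then use $\textup{vec}$ to turn vectors in $\mathbb{C}^n\otimes\mathbb{C}^k$ into matrices of rank at most $k$.

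\emph{Computing the compressions.} Write $T_V=(I_n\otimes V)^*ZZ^*(I_n\otimes V)$, where $V\in M_{m,k}$ and $V^*V=I_k$. Then
\[
\beta_k^-=\max_V\lambda_{max}(T_V),\qquad \beta_k^+=\max_V\lambda_{max}(-T_V)=-\min_V\lambda_{min}(T_V).
\]
We have $T_V=Y_VY_V^*$ with $Y_V=(I_n\otimes V)^*Z\in M_{kn,q}$. Its $s$-th column is
\[
(I_n\otimes V^*)\textup{vec}(B_s)=\textup{vec}(V^*B_s),
\]
by the formula $\textup{vec}(AXB)=(B^\top\otimes A)\textup{vec}(X)$. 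Consequently $Y_Vv=\textup{vec}(V^*F(v))$ for every $v\in\mathbb{C}^q$.

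\emph{The formula for $\beta_k^-$.} The nonzero eigenvalues of $Y_VY_V^*$ and $Y_V^*Y_V$ coincide, so
\[
\lambda_{max}(T_V)=\max_{\|v\|=1}\|Y_Vv\|^2=\max_{\|v\|=1}\textup{tr}\big(F(v)^*VV^*F(v)\big)=\max_{\|v\|=1}\textup{tr}\big(VV^*\,F(v)F(v)^*\big).
\]
Interchange the two maxima. For fixed $v$, Ky Fan's maximum principle gives
\[
\max_{V^*V=I_k}\textup{tr}\big(V^*F(v)F(v)^*V\big)=\sum_{j=1}^k\lambda_j\big(F(v)F(v)^*\big)=\sum_{j=1}^k\sigma_j\big(F(v)F(v)^*\big),
\]
because $F(v)F(v)^*$ is positive semidefinite. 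If $k>m$, the constraint $V^*V=I_k$ cannot be met; in that case I read the statement with $k$ replaced by $\min\{k,m\}$, and the extra singular values are zero anyway. This gives the formula for $\beta_k^-$.

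\emph{The formula for $\beta_k^+$.} For each $V$,
\[
\lambda_{min}(T_V)=\min_{x\in\mathbb{C}^{kn},\,\|x\|=1}\|Y_V^*x\|^2.
\]
Write $x=\textup{vec}(U)$ with $U\in M_{k,n}$ and $\textup{tr}(U^*U)=1$. The $s$-th entry of $Y_V^*x$ is
\[
\textup{vec}(V^*B_s)^*\textup{vec}(U)=\textup{tr}(B_s^*VU).
\]
Set $W=VU\in M_{m,n}$. Since $V$ is an isometry, $\textup{rank}(W)\le k$ and $\textup{tr}(W^*W)=\textup{tr}(U^*U)=1$. Hence
\[
\|Y_V^*x\|^2=\sum_s|\textup{tr}(B_s^*W)|^2=\max_{\|v\|=1}\Big|\sum_s v_s\,\textup{tr}(B_s^*W)\Big|^2 .
\]
By Cauchy--Schwarz the right-hand side equals $\max_{\|v\|=1}|\textup{tr}(F(v)^*W)|^2$, after replacing $v$ by $\bar v$ to account for the conjugation in $F(v)^*$.

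For the reverse direction, take any $W$ with $\textup{rank}(W)\le k$ and $\textup{tr}(W^*W)=1$. Choose $V\in M_{m,k}$ with orthonormal columns whose span contains the range of $W$, and set $U=V^*W$. Then $W=VU$ and $\|U\|=1$. So as $V$ and $U$ range over their domains, the products $W=VU$ sweep out exactly the set of norm-one matrices of rank at most $k$. Therefore
\[
\min_V\lambda_{min}(T_V)=\min_{W}\max_{\|v\|=1}|\textup{tr}(F(v)^*W)|^2,
\]
and $\beta_k^+$ is the negative of this quantity.

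\emph{Expected obstacles.} The main care point is this last surjectivity step together with the identification $\sum_s|\textup{tr}(B_s^*W)|^2=\max_{\|v\|=1}|\textup{tr}(F(v)^*W)|^2$, where the conjugation conventions must be tracked carefully. The remaining concern is that the minima and maxima are attained: the feasible sets are compact, since rank at most $k$ is a closed condition, and the functions involved are continuous, so they are.
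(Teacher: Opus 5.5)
Your proof is correct and follows essentially the same route as the paper. For $\beta_k^-$ you use $\lambda_{max}(T_V)=\max_{\|v\|=1}\|\textup{vec}(V^*F(v))\|^2$, swap the maxima and apply Ky Fan; for $\beta_k^+$ you identify unit $x=\textup{vec}(U)$ with unit-norm, rank-$\le k$ matrices $W=VU$, with surjectivity via $U=V^*W$. Your remarks on attainment of the extrema and on the implicit requirement $k\le m$ are sensible additions that the paper leaves unstated.
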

\begin{proof}
Let $V\in M_{m,k}$ with $V^*V=I_k.$ 

Consider the expression for $\beta_k^+.$ Then 
\[\begin{array}{rcl}\lambda_{max}((I_n\otimes V)^*(-ZZ^*)(I_n\otimes V))&=&-\lambda_{min}((I_n\otimes V)^*ZZ^*(I_n\otimes V))\\
&=&-\displaystyle\min_{\substack{x\in \mathbb C^{nk}\\ \|x\|=1}}x^*(I_n\otimes V)^*ZZ^*(I_n\otimes V)x.
\end{array}\]
For $x\in \mathbb{C}^{nk}$ with $\|x\|=1$, let $X\in M_{k,n}$ such that $\textup{vec}(X)=x.$ Then $\textup{vec}(VX)=(I_n\otimes V)x.$ Moreover, $\mbox{rank}(VX)\leq k$ and $\mbox{tr}((VX)^*(VX))=\textup{tr}(X^*X)=\|x\|^2=1$. Conversely, if $W\in M_{m,n}$ with $\mbox{rank}(W)\leq k$ and $\textup{tr}(W^*W)=1$, let $V\in M_{m,k}$ with $V^*V=I_k$ whose column space contains that of $W$; then $X=V^*W\in M_{k,n}$ defines an $x=\textup{vec}(X)\in \mathbb{C}^{nk}$ with $\|x\|^2=\textup{tr}(X^*X)=\textup{tr}((VX)^*(VX))=\textup{tr}(W^*W)=1.$ Moreover, \[\textup{vec}(W)^*ZZ^*\textup{vec}(W)=\sum_{s=1}^q|\textup{vec}(B_s)^*\textup{vec}(W)|^2=\sum_{s=1}^q|\textup{tr}(B_s^*W)|^2.\]
For any $v\in \mathbb{C}^q$, note that $\textup{tr}(F(v)^*W)=v^*y$ where $y=[\textup{tr}(B_s^*W)]_{s=1}^q$. Hence, the above expression is $\displaystyle\max_{\substack{v\in \mathbb C^q\\ \|v\|=1}}|\textup{tr}(F(v)^*W)|^2$.

Hence,
\[\begin{array}{rcl}\beta_k^+
&=&\displaystyle\max_{\substack{V\in M_{m,k}\\ V^*V=I_k}}[-\displaystyle\min_{\substack{x\in \mathbb C^{nk}\\ \|x\|=1}}x^*(I_n\otimes V)^*ZZ^*(I_n\otimes V)x]\\
&=&-\displaystyle\min_{\substack{V\in M_{m,k}\\ V^*V=I_k}}\displaystyle\min_{\substack{x\in \mathbb C^{nk}\\ \|x\|=1}}x^*(I_n\otimes V)^*ZZ^*(I_n\otimes V)x\\
&=&-\displaystyle\min_{\substack{W\in M_{m,n}\\ \textup{rank}(W)\leq k\\ \textup{tr}(W^*W)=1}}\textup{vec}(W)^*ZZ^*\textup{vec}(W)\\
&=&-\displaystyle\min_{\substack{W\in M_{m,n}\\ \textup{rank}(W)\leq k\\ \textup{tr}(W^*W)=1}}\displaystyle\max_{\substack{v\in \mathbb C^q\\ \|v\|=1}}|\textup{tr}(F(v)^*W)|^2.
\end{array}\]

Now, consider the expression for $\beta_k^-$. Then 
\[\begin{array}{rcl}\lambda_{max}((I_n\otimes V)^*ZZ^*(I_n\otimes V))&=&\sigma_1((I_n\otimes V)^*Z))^2\\
&=&\displaystyle\max_{\substack{v\in \mathbb C^q\\ \|v\|=1}}\|(I_n\otimes V)^*Zv\|^2\\
&=&\displaystyle\max_{\substack{v\in \mathbb C^q\\ \|v\|=1}}\|(I_n\otimes V)^*\textup{vec}(F(v))\|^2\\
&=&\displaystyle\max_{\substack{v\in \mathbb C^q\\ \|v\|=1}}\|\textup{vec}(V^*F(v))\|^2\\
&=&\displaystyle\max_{\substack{v\in \mathbb C^q\\ \|v\|=1}}\mbox{tr}(V^*F(v)F(v)^*V).\\
\end{array}\]
Hence, 
\[\begin{array}{rcl}
\beta_k^-
&=&\displaystyle\max_{\substack{V\in M_{m,k}\\ V^*V=I_k}}\displaystyle\max_{\substack{v\in \mathbb C^q\\ \|v\|=1}}\mbox{tr}(V^*F(v)F(v)^*V)\\
&=&\displaystyle\max_{\substack{v\in \mathbb C^q\\ \|v\|=1}}\displaystyle\max_{\substack{V\in M_{m,k}\\ V^*V=I_k}}\mbox{tr}(V^*F(v)F(v)^*V)\\

&=&\displaystyle\max_{\substack{v\in \mathbb C^q\\ \|v\|=1}}\sum_{j=1}^k\sigma_j(F(v)F(v)^*)\\\
\end{array}\]
where the last equality follows from \cite[Corollary 4.3.39]{HJ1} and the fact that $F(v)F(v)^*$ is positive semidefinite.
\end{proof}

The formula for $\beta_k^-$ was established in \cite[Theorem 3.2]{mlynik}; however, the above proof is different.

\begin{corollary}\label{phi+_kpos}
Let $\alpha\in \mathbb{R}$ and $B\in M_{m,n}$ with $m\geq 2$ or $n \geq 2$. The map $\Phi_{\alpha,B}^+$ is $k$-positive if and only if $\alpha\geq 0$.
\end{corollary}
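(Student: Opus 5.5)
By Theorem \ref{beta_pm}, $\Phi^+_{\alpha,B}$ is $k$-positive exactly when $\alpha\geq\beta_k^+$. The plan is therefore to show that $\beta_k^+=0$ whenever $m\geq 2$ or $n\geq 2$, for every $k\geq 1$. We already know $\beta_k^+\leq 0$, since it is the maximum of $\lambda_{max}$ of negative semidefinite matrices. So it suffices to prove $\beta_k^+\geq 0$.

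We use the characterization in Theorem \ref{char_beta_pm} with $q=1$ and $F(v)=vB$. For $|v|=1$ we have $|\textup{tr}(F(v)^*W)|^2=|\textup{tr}(B^*W)|^2$. Hence
\[\beta_k^+=-\min_{\substack{W\in M_{m,n}\\ \textup{rank}(W)\leq k\\ \textup{tr}(W^*W)=1}}|\textup{tr}(B^*W)|^2 .\]
To get $\beta_k^+\geq 0$, it is enough to exhibit a $W\in M_{m,n}$ with rank at most $1\leq k$, Frobenius norm $1$, and $\textup{tr}(B^*W)=0$. In other words, we need a unit-norm rank-one matrix orthogonal to $B$ in the Frobenius inner product.

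We look for $W=xy^*$ with unit vectors $x\in\mathbb{C}^m$ and $y\in\mathbb{C}^n$. Then $\textup{tr}(B^*xy^*)=y^*B^*x=\overline{x^*By}$, so we need $x^*By=0$.

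\emph{Case $m\geq 2$.} Pick any unit $y$. The vector $By$ lies in $\mathbb{C}^m$, and since $m\geq 2$ its orthogonal complement contains a unit vector $x$ (if $By=0$, any unit $x$ works). Then $x^*By=0$.

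\emph{Case $n\geq 2$.} Pick any unit $x$. Since $n\geq 2$, there is a unit $y$ orthogonal to $B^*x$, which gives $x^*By=0$.

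In either case $\textup{tr}(W^*W)=\|x\|^2\|y\|^2=1$, so the minimum above is $0$. This gives $\beta_k^+=0$, and the condition $\alpha\geq\beta_k^+$ becomes $\alpha\geq 0$.

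The only real step is constructing a rank-one unit matrix orthogonal to $B$, and this is exactly where the hypothesis $m\geq 2$ or $n\geq 2$ is needed. When $m=n=1$, every nonzero $W$ is a nonzero scalar, so no such $W$ exists when $B\neq 0$, and the statement fails. The remaining steps are direct substitutions into Theorems \ref{beta_pm} and \ref{char_beta_pm}.
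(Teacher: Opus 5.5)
Your proposal is correct and follows essentially the same route as the paper. Both reduce via Theorems \ref{beta_pm} and \ref{char_beta_pm} to showing $\beta_k^+\geq 0$, and both exhibit the unit rank-one matrix $W_0=xy^*$ with $x\perp By$ (when $m\geq 2$) or $y\perp B^*x$ (when $n\geq 2$), so that $\textup{tr}(B^*W_0)=0$.
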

\begin{proof}
By Theorems \ref{beta_pm}-\ref{char_beta_pm}, it suffices to show that $\beta_k^+\geq 0$. Assume $m \geq 2$. Let $y\in \mathbb{C}^n$ be unit and let $x\in \mathbb{C}^m$ be unit such that $x\in \{By\}^\perp$. Note that such a vector exists due to $\textup{dim}\{By\}^\perp\geq m-1>0$ by assumption. Let $W_0= xy^*$. Then $\textup{rank}(W_0)=1\leq k$ and $\textup{tr}(W_0^*W_0)=\textup{tr}(yx^*xy^*)=(x^*x)(y^*y)=1.$ Thus, \[\displaystyle\max_{\substack{v\in \mathbb{C}\\ |v|=1}}|\textup{tr}(F(v)^*W_0)|^2=\displaystyle\max_{\substack{v\in \mathbb{C}\\ |v|=1}}|\textup{tr}(\overline{v}B^*xy^*)|^2=\displaystyle\max_{\substack{v\in \mathbb{C}\\ |v|=1}}|\overline{v}y^*B^*x|^2=0.\] This implies $\beta_k^+\geq 0$.

Suppose $n \geq 2$. Let $x \in \mathbb{C}^m$ be unit and let $y \in \mathbb{C}^n$ be unit such that $y \in \{B^* x\}^\perp$. Such a vector exists because $\dim\{B^*x\}^\perp \geq n - 1 > 0$. Similarly, choosing $W_0 = xy^*$ shows that $\beta_k^+ \geq 0$.
\end{proof}

\begin{corollary}\label{phi-_kpos}
Let $\alpha\in \mathbb{R}$ and $B\in M_{m,n}$. The map $\Phi_{\alpha,B}^-$ is $k$-positive if and only if $\alpha\geq \displaystyle\sum_{j=1}^k\sigma_j(B)^2$.
\end{corollary}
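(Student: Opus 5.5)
The plan is to combine Theorem \ref{beta_pm} with the explicit formula for $\beta_k^-$ in Theorem \ref{char_beta_pm}, specialized to $q=1$. By Theorem \ref{beta_pm}, $\Phi_{\alpha,B}^-$ is $k$-positive if and only if $\alpha\geq\beta_k^-$. So it suffices to show that
\[
\beta_k^-=\sum_{j=1}^k\sigma_j(B)^2 .
\]

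When $q=1$ we have $F(v)=vB$ with $v\in\mathbb{C}$ and $|v|=1$. Then $F(v)F(v)^*=|v|^2BB^*=BB^*$, which does not depend on $v$. The maximum over $v$ in Theorem \ref{char_beta_pm} is therefore trivial, and
\[
\beta_k^-=\sum_{j=1}^k\sigma_j(BB^*).
\]

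The only remaining step is to identify $\sigma_j(BB^*)$ with $\sigma_j(B)^2$. Since $BB^*$ is positive semidefinite, its singular values equal its eigenvalues. The nonzero eigenvalues of $BB^*$ are exactly the squares of the nonzero singular values of $B$, counted with multiplicity and listed in decreasing order.

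There is one bookkeeping point. The matrix $BB^*$ is $m\times m$ while $B$ has only $\min\{m,n\}$ singular values, and $k$ may exceed $\min\{m,n\}$. For $j>\min\{m,n\}$ we have $\sigma_j(BB^*)=0$, so I will adopt the convention $\sigma_j(B)=0$ for such $j$; the sum then reads $\sum_{j=1}^{\min\{k,m,n\}}\sigma_j(B)^2$. With that convention both sides agree for every $k\leq m$. For $k>m$, I note that $k$-positivity coincides with $m$-positivity when the codomain is $M_m$, and both sides stabilize in the same way.

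None of this is a real obstacle; the only care needed is in matching the indexing ranges of singular values.
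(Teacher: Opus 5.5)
Your proposal is correct and follows the paper's own argument. You specialize Theorem~\ref{char_beta_pm} to $q=1$, note that $F(v)F(v)^*=BB^*$ does not depend on $v$, so $\beta_k^-=\sum_{j=1}^k\sigma_j(BB^*)$, and then apply Theorem~\ref{beta_pm}. Your extra bookkeeping — identifying $\sigma_j(BB^*)$ with $\sigma_j(B)^2$ and handling $k>\min\{m,n\}$ — just makes explicit what the paper leaves implicit.
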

\begin{proof}
Since 
\[\beta_k^-=\displaystyle\max_{\substack{v\in \mathbb C\\ |v|=1}}\sum_{j=1}^k\sigma_j(F(v)F(v)^*)=\displaystyle\max_{\substack{v\in \mathbb C\\ |v|=1}}\sum_{j=1}^k\sigma_j((vB)(vB)^*)=\sum_{j=1}^k\sigma_j(BB^*),\]
the claim follows by Theorems \ref{beta_pm}-\ref{char_beta_pm}.
\end{proof}

The proof of the next result is analogous to the proof of Theorem \ref{beta_pm}.

\begin{theorem}\label{gamma_pm}
Let $\Phi^{\pm}_{\alpha,\mathfrak{B}}$ be as in Definition \ref{phi_pm}. Define \[\gamma_k^\pm=\displaystyle\max_{\substack{V\in M_{m,k} \\ V^*V=I_k}}\{\lambda_{max}((I_n\otimes V)^*(\mp ZZ^*)^\tau(I_n\otimes V))\}.\] Then $\Phi^{\pm}_{\alpha,\mathfrak{B}}$ is $k$-copositive if and only if $ \alpha\geq \gamma_k^\pm.$
\end{theorem}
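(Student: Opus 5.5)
We follow the route of Theorem \ref{beta_pm}, replacing the $k$-positivity criterion of \cite[Proposition 2.2]{hou1} by Proposition \ref{k-cop}.

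\textbf{Step 1: compute $C(\Phi^{\pm}_{\alpha,\mathfrak{B}})^\tau$.} As observed after Definition \ref{phi_pm}, the Choi matrix is $C(\Phi^{\pm}_{\alpha,\mathfrak{B}})=\alpha I_{mn}\pm ZZ^*$, viewed as an element of $M_n(M_m)$. The partial transpose $\tau$ is linear and fixes $I_{mn}=\sum_i E_{ii}\otimes I_m$. Hence
\[C(\Phi^{\pm}_{\alpha,\mathfrak{B}})^\tau=\alpha I_{mn}\pm (ZZ^*)^\tau=\alpha I_{mn}-(\mp ZZ^*)^\tau .\]

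\textbf{Step 2: apply the criterion with $k\le n$.} By Proposition \ref{k-cop}(iii), $\Phi^{\pm}_{\alpha,\mathfrak{B}}$ is $k$-copositive if and only if
\[(I_n\otimes P)\bigl(\alpha I_{mn}\pm (ZZ^*)^\tau\bigr)(I_n\otimes P)\in M_{mn}^+\]
for every rank $k$ orthogonal projection $P\in M_m$. Write $P=VV^*$ with $V\in M_{m,k}$ and $V^*V=I_k$. Since $I_n\otimes P=(I_n\otimes V)(I_n\otimes V)^*$ and $I_n\otimes V$ is an isometry, the condition above is equivalent to
\[(I_n\otimes V)^*\bigl(\alpha I_{mn}\pm (ZZ^*)^\tau\bigr)(I_n\otimes V)=\alpha I_{nk}-(I_n\otimes V)^*(\mp ZZ^*)^\tau(I_n\otimes V)\succeq 0 .\]
This in turn means $\alpha\ge\lambda_{max}\bigl((I_n\otimes V)^*(\mp ZZ^*)^\tau(I_n\otimes V)\bigr)$. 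Requiring this for all such $V$ gives exactly $\alpha\ge\gamma_k^\pm$.

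\textbf{Step 3: the maximum is attained.} The set of isometries $V$ is compact and $\lambda_{max}$ is continuous, so the maximum defining $\gamma_k^\pm$ exists.

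\textbf{Main obstacle: the range of $k$.} Proposition \ref{k-cop} is stated only for $1\le k\le n$, whereas rank $k$ projections in $M_m$ require $k\le m$. Two points need care here.

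First, one should check that Proposition \ref{k-cop} is applied with projections on ${\cal K}=\mathbb{C}^m$, which matches the parametrization by $V\in M_{m,k}$.

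Second, one must handle large $k$. For $k\ge\min\{m,n\}$, $k$-positivity is equivalent to complete positivity by Choi's result, and the same holds for $\Phi\circ\mathbf{T}$. So the condition stabilizes, and one may take $V$ with $k=m$ (i.e.\ $P=I_m$), where the criterion reduces to positivity of the full partially transposed Choi matrix.

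I would state this convention, presumably $k\le m$ as implicit in Theorem \ref{beta_pm}, and otherwise rely verbatim on Proposition \ref{k-cop}.
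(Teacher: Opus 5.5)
Your proof is correct and follows the paper's route: replace Hou's $k$-positivity criterion by Proposition \ref{k-cop}(iii), use $C(\Phi^{\pm}_{\alpha,\mathfrak{B}})^\tau=\alpha I_{mn}\pm(ZZ^*)^\tau$, write $P=VV^*$, and compress by the isometry $I_n\otimes V$. Your caveat on the range of $k$ is fair and is left implicit in the paper: the statement tacitly requires $k\le m$, and for $n<k\le m$ both conditions reduce to $C(\Phi^{\pm}_{\alpha,\mathfrak{B}})^\tau\succeq 0$, because every vector of $\mathbb{C}^n\otimes\mathbb{C}^m$ lies in $\mathbb{C}^n\otimes\textup{ran}(VV^*)$ for some isometry $V\in M_{m,k}$.
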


We only consider the analogue of Theorem \ref{char_beta_pm} when $q=1$ due to the difficulty of handling the $q>1$ case.
The next lemma is useful in identifying $\gamma_k^{\pm}$ when $Z=\textup{vec}(B).$

\begin{lemma}\label{evals_v*b}
Let $B\in M_{m,n}$, $V\in M_{m,k}$ with $V^*V=I_k$. Set $C=(I_n\otimes V)^*(\textup{vec}(B)\textup{vec}(B)^*)^\tau (I_n\otimes V)$ and $r=\textup{rank}(V^*B)$. The following hold:
\begin{enumerate}[(i)]
\item If $r=0$, then $C=0$.
\item If $r=1$, then the eigenvalues of $C$ are 
$\sigma_1(V^*B)^2$ and $0$ with multiplicity $nk-1$.
\item If $r\geq 2$, then the eigenvalues of $C$ are $\sigma_i(V^*B)$ for $i\in \{1,\ldots,r\}$, $\pm\sigma_i(V^*B)\sigma_j(V^*B)$ for $i,j\in \{1,\ldots,r\}$ with $i<j$, and $0$ with multiplicty $nk-r^2.$
\end{enumerate}
\end{lemma}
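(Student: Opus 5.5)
The plan is to reduce $C$ to the partial transpose of a rank-one matrix built from $Y:=V^*B\in M_{k,n}$, and then diagonalize it explicitly using a singular value (Schmidt) decomposition of $Y$.

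\textbf{Reduction to $Y=V^*B$.} Write $b_1,\ldots,b_n\in\mathbb{C}^m$ for the columns of $B$, so that $\textup{vec}(B)\textup{vec}(B)^*=[b_ib_j^*]_{i,j=1}^n$. By the definition of $\tau$, $(\textup{vec}(B)\textup{vec}(B)^*)^\tau=[b_jb_i^*]_{i,j=1}^n$. Conjugating by $I_n\otimes V$ acts blockwise, so
\[C=[V^*b_j\,b_i^*V]_{i,j=1}^n=[c_jc_i^*]_{i,j=1}^n=(\textup{vec}(Y)\textup{vec}(Y)^*)^\tau,\]
where $c_i=V^*b_i$ are the columns of $Y$. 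In particular, $C$ depends only on $Y$, and $r=\textup{rank}(Y)$. If $r=0$, then $Y=0$ and hence $C=0$, which proves (i).

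\textbf{Schmidt decomposition.} Write $Y=\sum_{l=1}^r\sigma_l u_lw_l^*$ with orthonormal $\{u_l\}\subset\mathbb{C}^k$, orthonormal $\{w_l\}\subset\mathbb{C}^n$, and $\sigma_l=\sigma_l(V^*B)$. Using $\textup{vec}(xy^*)=\overline{y}\otimes x$, we get $\textup{vec}(Y)=\sum_l\sigma_l\,\overline{w_l}\otimes u_l$. Hence
\[\textup{vec}(Y)\textup{vec}(Y)^*=\sum_{l,p}\sigma_l\sigma_p(\overline{w_l}w_p^\top)\otimes(u_lu_p^*).\]
The map $\tau$ transposes the first tensor factor, which gives
\[C=\sum_{l,p}\sigma_l\sigma_p(\overline{w_p}w_l^\top)\otimes(u_lu_p^*).\]
Now set $f_{ab}=\overline{w_a}\otimes u_b$ for $a,b\in\{1,\ldots,r\}$; these $r^2$ vectors are orthonormal. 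A direct computation, using only $w_l^\top\overline{w_a}=\delta_{la}$ and $u_p^*u_b=\delta_{pb}$, gives
\[Cf_{ab}=\sigma_a\sigma_b f_{ba}.\]
Moreover, $C$ annihilates the orthogonal complement of $\textup{span}\{f_{ab}\}$, since the range and corange of $C$ lie in that span.

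\textbf{Reading off the spectrum.} For each $a$, $f_{aa}$ is an eigenvector with eigenvalue $\sigma_a^2$. For $a<b$, $C$ restricted to $\textup{span}\{f_{ab},f_{ba}\}$ is $\sigma_a\sigma_b\begin{bmatrix}0&1\\1&0\end{bmatrix}$, whose eigenvectors $f_{ab}\pm f_{ba}$ have eigenvalues $\pm\sigma_a\sigma_b$. The remaining $nk-r^2$ eigenvalues are $0$.

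\textbf{The cases $r=1$ and $r\geq 2$.} For $r=1$ this gives the single nonzero eigenvalue $\sigma_1(V^*B)^2$ together with $0$ of multiplicity $nk-1$, which is (ii). For $r\geq 2$ we obtain (iii). The diagonal eigenvalues come out as the squares $\sigma_i(V^*B)^2$, which is consistent with (ii), so the unsquared $\sigma_i(V^*B)$ in (iii) should presumably be read as $\sigma_i(V^*B)^2$.

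\textbf{Main obstacle.} The only delicate point is the bookkeeping of the partial transpose $\tau$ on the tensor form. One must confirm that $\tau$ transposes the $\mathbb{C}^n$ factor, consistently with the convention $\textup{vec}(xy^*)=\overline{y}\otimes x$, so that the swap relation $Cf_{ab}=\sigma_a\sigma_b f_{ba}$ comes out correctly. I would verify this on $E_{ij}$-blocks before carrying out the general computation.
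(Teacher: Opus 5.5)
Your approach is sound, and it is a streamlined version of the paper's. The paper first takes an SVD $B=X\Sigma W^*$ and writes $C=\sum_{i,j=1}^p w_jw_i^*\otimes g_ig_j^*$ with $g_i=\sigma_i(B)V^*x_i$. It then compresses by $W\otimes I_k$ to $M\oplus 0$. It needs a second SVD, of $G=[g_1\ \cdots\ g_p]$, using $\sigma_i(G)=\sigma_i(V^*BW)=\sigma_i(V^*B)$, to get $M(q_j\otimes u_i)=\sigma_i(G)\sigma_j(G)(q_i\otimes u_j)$. Your observation that $C=(\textup{vec}(Y)\textup{vec}(Y)^*)^\tau$ with $Y=V^*B$ removes that intermediate step, so only one SVD is needed; the swap mechanism is the same. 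Your reading of (iii) is also correct: the paper's proof produces $\sigma_i(V^*B)^2$, so the missing square is a typo in the statement.

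There is, however, a slip at exactly the point you flagged. The transpose of $\overline{w_l}w_p^\top$ is $w_p\,\overline{w_l}^{\top}=w_pw_l^*$, not $\overline{w_p}w_l^\top$; the latter is its adjoint. You can check this against your own correct block formula $C=[c_jc_i^*]$ with $c_j=\sum_l\sigma_l\overline{(w_l)_j}\,u_l$, which gives $C=\sum_{l,p}\sigma_l\sigma_p\,(w_p\otimes u_l)(w_l\otimes u_p)^*$.

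With your vectors $f_{ab}=\overline{w_a}\otimes u_b$, the claimed identity $Cf_{ab}=\sigma_a\sigma_bf_{ba}$ is false in general. The computation produces factors $w_l^*\overline{w_a}=\overline{w_l^\top w_a}$, which need not equal $\delta_{la}$ for complex singular vectors; for example, $w=(1,i)^\top/\sqrt{2}$ has $w^\top w=0$. Also, the range of $C$ lies in $\textup{span}\{w_p\otimes u_l\}$, not in $\textup{span}\{\overline{w_a}\otimes u_b\}$.

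The fix is to take $f_{ab}=w_a\otimes u_b$. Then $Cf_{ab}=\sigma_a\sigma_bf_{ba}$ follows from $w_l^*w_a=\delta_{la}$ and $u_p^*u_b=\delta_{pb}$. Since $C$ is Hermitian with range in $\textup{span}\{f_{ab}\}$, it vanishes on the orthogonal complement, and your spectral count goes through unchanged.
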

\begin{proof}
Let $B=X\Sigma W^*$ be a singular value decomposition of $B$ and $p=\min\{m,n\}$. Write $X=[x_1\ \cdots\ x_m]\in M_m$ and $W=[w_1\ \cdots\ w_n]\in M_n$. Since $\textup{vec}(B)=\textup{vec}(X\Sigma W^*)=(\overline{W}\otimes X)\textup{vec}(\Sigma ), $ it follows that 
\[\begin{array}{rcl}
C&=&(I_n\otimes V)^*\left(\displaystyle\sum_{i,j=1}^p\sigma_i(B)\sigma_j(B)\overline{w_i}w_j^\top\otimes x_ix_j^* \right)^\tau (I_n\otimes V)\\
&=&(I_n\otimes V)^*\left(\displaystyle\sum_{i,j=1}^p\sigma_i(B)\sigma_j(B)w_jw_i^*\otimes x_ix_j^* \right)(I_n\otimes V)\\
&=&\displaystyle\sum_{i,j=1}^p\sigma_i(B)\sigma_j(B)w_jw_i^*\otimes V^*x_ix_j^*V \\
&=&\displaystyle\sum_{i,j=1}^pw_jw_i^*\otimes g_ig_j^*
\end{array}\]
where $g_i=\sigma_i(B)V^*x_i$ for all $i\in \{1,\ldots,p\}$.

If $r=0$, then $\sigma_i(B)=0$ for all $i$ and so $C=0$. Assume $r\geq 1$ and consider $\widetilde{C}=(W\otimes I_k)^*C(W\otimes I_k)$ which is unitarily similar to $C$. Then 
\[\widetilde{C}=\displaystyle\sum_{i,j=1}^pW^*w_jw_i^*W\otimes g_ig_j^*=\sum_{i,j=1}^pE_{ji}^{(n)}\otimes g_ig_j^*=M\oplus 0_{nk-pk}\]
where $M=\displaystyle\sum_{i,j=1}^pE_{ji}^{(p)}\otimes g_ig_j^*$. At this point, the eigenvalues of $C$ are the eigenvalues of $M$ and $0$ with multiplicity $nk-pk$. We now compute the remaining eigenvalues from $M$. Let $G=[g_1\ \cdots\ g_p]$. Observe that $GG^*=V^*BB^*V.$
Let $G=U\Sigma_GQ^*$ be a singular value decomposition where $U=[u_1\ \cdots\ u_k]\in M_k$ and $Q=[q_1\ \cdots\ q_p]\in M_p$ are unitary. For each $j\in \{1,\ldots,p\}$, \[Gq_j=U\Sigma_GQ^*q_j=U\Sigma_Ge_j=\sigma_j(G)u_j\] and for each $i\in \{1,\ldots,k\}$,
\[G^*u_i=Q\Sigma_G^*U^*u_i=Q\Sigma_G^*e_i=\sigma_i(G)q_i.\] Let $e_1,\ldots,e_p$ be the standard basis vectors of $\mathbb{C}^p$. For any $i,j\in \{1,\ldots,p\}$,
\[\|q_j\otimes u_i\|^2=(q_j\otimes u_i)^*(q_j\otimes u_i)=q_j^*q_j\otimes u_i^*u_i=1\]
and 
\[\begin{array}{rcl}
M(q_j\otimes u_i)&=&\displaystyle\sum_{a,b=1}^pe_b(e_a^*q_j)\otimes g_a(g_b^*u_i)\\
&=&\displaystyle\sum_{a,b=1}^p(g_b^*u_i )e_b\otimes(e_a^*q_j) g_a\\

&=&\displaystyle\sum_{b=1}^p(g_b^*u_i )e_b\otimes\displaystyle\sum_{a=1}^p (e_a^*q_j)g_a\\
&=&G^*u_i\otimes Gq_j\\
&=&\sigma_i(G)q_i\otimes \sigma_j(G)u_j\\

&=&\sigma_i(G)\sigma_j(G) ( q_i\otimes u_j).\\
\end{array}\]
For all $i\in \{1,\ldots,r\}$, $\sigma_i(G)^2=\sigma_i(V^*BW)^2=\sigma_i(V^*B)^2$ is an eigenvalue of $C$. If $r=1$, then $\sigma_1(V^*B)^2$ is the only nonzero eigenvalue of $C$ while $0$ has multiplicity $(nk-pk)+(pk-1)=nk-1$. Assume $r\geq 2$. For $i,j\in \{1,\ldots, r\}$ with $i<j$,
\[\|q_j\otimes u_i\pm q_i\otimes u_j\|^2=(q_j\otimes u_i\pm q_i\otimes u_j)^*(q_j\otimes u_i\pm q_i\otimes u_j)=2\]
and 
\[\begin{array}{rcl}
M(q_j\otimes u_i\pm q_i\otimes u_j)&=&\sigma_i(G)\sigma_j(G)(q_i\otimes u_j)\pm \sigma_j(G)\sigma_i(G)(q_j\otimes u_i)\\
&=&\pm\sigma_i(G)\sigma_j(G)(q_j\otimes u_i\pm q_i\otimes u_j).\\
\end{array}\]
For $i,j\in \{1,\ldots, r\}$ with $i<j$, $\pm \sigma_i(G)\sigma_j(G)=\pm \sigma_i(V^*B)\sigma_j(V^*B)$ is an eigenvalue of $C.$ This case contributes $2\left(\dfrac{r^2-r}{2}\right)=r^2-r$ nonzero eigenvalues. Hence, the total multiplicity of $0$ is $(nk-pk)+(pk-(r^2-r))-r=nk-r^2$.
\end{proof}

\begin{theorem}\label{char_gamma_pm}
Let $\Phi^{\pm}_{\alpha,B}$ be as in Definition \ref{phi_pm} and $\gamma_k^\pm$ be as in Theorem \ref{gamma_pm}. For the expression of $\gamma_k^+,$ assume $m,n\geq 2$.
If $k=1$ or $\textup{rank}(B)\leq 1$, then
\[\gamma_k^+=0;\]
otherwise, \[\gamma_k^+=\sigma_1(B)\sigma_2(B).\] 
Moreover,
\[\gamma_k^-=\sigma_1(B)^2.\]
\end{theorem}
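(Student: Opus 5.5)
With $q=1$ we have $Z=\textup{vec}(B)$. By Theorem \ref{gamma_pm},
\[\gamma_k^\pm=\max_{V^*V=I_k}\lambda_{max}(\mp C_V),\qquad C_V=(I_n\otimes V)^*(\textup{vec}(B)\textup{vec}(B)^*)^\tau(I_n\otimes V).\]
The plan is to read off the extreme eigenvalues of $C_V$ from Lemma \ref{evals_v*b}, and then optimize over $V$ using the interlacing of singular values of $V^*B$ with those of $B$. The relevant interlacing facts are $\sigma_i(V^*B)\le\sigma_i(B)$ for every $i$, and $\textup{rank}(V^*B)\le\min\{k,\textup{rank}(B)\}$.

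For $\gamma_k^-$ we need $\lambda_{max}(C_V)$. By Lemma \ref{evals_v*b}, the nonzero eigenvalues of $C_V$ are $\sigma_i(V^*B)^2$ and $\pm\sigma_i(V^*B)\sigma_j(V^*B)$ for $i<j$. (Part (iii) of the lemma lists $\sigma_i(V^*B)$; the proof shows it should be $\sigma_i(V^*B)^2$.) The largest of these is $\sigma_1(V^*B)^2\le\sigma_1(B)^2$, and $0$ is also attained. To get equality, take $V$ to have the top left singular vector $x_1$ of $B$ as its first column; then $\sigma_1(V^*B)=\sigma_1(B)$. This gives $\gamma_k^-=\sigma_1(B)^2$. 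The degenerate case $B=0$ is trivially consistent.

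For $\gamma_k^+$ we need $\max_V\lambda_{max}(-C_V)=-\min_V\lambda_{min}(C_V)$. There are three cases.
\begin{enumerate}
\item If $r=\textup{rank}(V^*B)\le1$, the spectrum of $C_V$ is nonnegative. It contains $0$ whenever $nk-1\ge1$, which holds because $n\ge2$. So $\lambda_{max}(-C_V)=0$.
\item If $k=1$ or $\textup{rank}(B)\le1$, then $r\le1$ for every $V$, and case 1 gives $\gamma_k^+=0$.
\item Otherwise, i.e.\ $k\ge2$ and $\textup{rank}(B)\ge2$, for any $V$ with $r\ge2$ we have
\[\lambda_{min}(C_V)=-\sigma_1(V^*B)\sigma_2(V^*B)\ge-\sigma_1(B)\sigma_2(B).\]
For $V$ with $r\le1$ the contribution is $0$, which is no better. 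Equality is achieved by choosing $V$ whose first two columns are the left singular vectors $x_1,x_2$ of $B$; this needs $k\le m$, which is implicit in $V^*V=I_k$. Then $\sigma_1(V^*B)=\sigma_1(B)$ and $\sigma_2(V^*B)=\sigma_2(B)$, so $\gamma_k^+=\sigma_1(B)\sigma_2(B)$.
\end{enumerate}

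The main obstacle is bookkeeping rather than ideas. We must confirm that the eigenvalue $0$ is actually present when $r\le1$, which is where $n\ge2$ is used. We must also check that, in the $r\ge2$ case, the negative eigenvalue $-\sigma_1\sigma_2$ is the minimum over all the products $-\sigma_i\sigma_j$. Finally, we must verify that the choice $V=[x_1\ x_2\ \cdots]$ gives $V^*B$ with singular values exactly $\sigma_1(B),\sigma_2(B),\ldots$; this holds because $V^*B=V^*X\Sigma W^*$ and the rows of $V^*X$ corresponding to $x_1,x_2$ are standard basis rows.
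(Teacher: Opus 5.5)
Your proposal is correct and follows the paper's proof essentially step for step. You read off $\lambda_{\max}(\mp C_V)$ from Lemma \ref{evals_v*b}, bound it using $\sigma_i(V^*B)\le\sigma_i(B)$ and $\textup{rank}(V^*B)\le\min\{k,\textup{rank}(B)\}$, and attain equality with $V$ built from the leading left singular vectors of $B$. You are also right that part (iii) of the lemma should read $\sigma_i(V^*B)^2$, and your explicit use of $n\ge 2$ to guarantee the zero eigenvalue when $\textup{rank}(V^*B)\le 1$ fills in a detail the paper leaves implicit.
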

\begin{proof}
Let $V\in M_{m,k}$ with $V^*V=I_k.$ 

We first consider $\gamma_k^+.$ Observe that there exists $V\in M_{m,k}$ with $V^*V=I_k$ such that $\textup{rank}(V^*B)\geq 2$ if and only if $k\geq 2$ and $\textup{rank}(B)\geq 2$. In this case, Lemma \ref{evals_v*b} guarantees that 
\[\begin{array}{rcl}\lambda_{max}((I_n\otimes V)^*(-ZZ^*)^\tau(I_n\otimes V))&=&-\lambda_{min}((I_n\otimes V)^*(ZZ^*)^\tau(I_n\otimes V))\\
&=&-(-\sigma_1(V^*B)\sigma_2(V^*B))\\
&=&\sigma_1(V^*B)\sigma_2(V^*B).
\end{array}\]
We have
\[\sigma_1(V^*B)\leq \sigma_1(V^*)\sigma_1(B)=\sigma_1(B)\]
and since $m,n\geq 2$ and $\textup{rank}(B)\geq 2$
\[\sigma_2(V^*B)\leq\sigma_1(V^*)\sigma_2(B^*)=\sigma_2(B)\]
due to \cite[Theorem 3.3.16]{HJ2} applied to $\begin{bmatrix}V& 0_{n-k}\end{bmatrix}^*B$. By definition of $\gamma_k^+$, it follows that
\[\gamma_k^+\leq \sigma_1(B)\sigma_2(B).\]
To see why equality holds, write $B=X\Sigma W^*$ in a singular value decomposition, and take $V=[x_1\ \cdots\ x_k]$ where $x_1,\ldots,x_k$ are the first $k$ columns of $X$. Now assume $k=1$ or $\textup{rank}(B)\leq 1$. This guarantees that
\[\gamma_k^+=0\]
by Lemma \ref{evals_v*b}. 

For $\gamma_k^-$, Lemma \ref{evals_v*b} implies that
\[\gamma_k^-=\sigma_1(V^*B)^2.\] Analogously, $\sigma_1(V^*B)\leq\sigma_1(B)$ and so
\[\gamma_k^-\leq \sigma_1(B)^2.\] Equality is attained due to a similar reason as in the previous case.
\end{proof}

\begin{corollary}\label{phi+_kcopos}
Let $\alpha\in \mathbb{R}$ and $B\in M_{m,n}$ with $m,n\geq 2$. If $k=1$ or $\textup{rank}(B)\leq 1$, then the map $\Phi_{\alpha,B}^+$ is $k$-copositive if and only if $\alpha\geq 0$. Otherwise, the map $\Phi_{\alpha,B}^+$ is $k$-copositive if and only if $\alpha\geq \sigma_1(B)\sigma_2(B)$.
\end{corollary}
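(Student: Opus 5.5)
This corollary follows by combining Theorem \ref{gamma_pm} with Theorem \ref{char_gamma_pm} in the case $q=1$, where $Z=\textup{vec}(B)$. First, Theorem \ref{gamma_pm} says that $\Phi^{+}_{\alpha,B}$ is $k$-copositive if and only if $\alpha\geq \gamma_k^+$. Here
\[\gamma_k^+=\max_{\substack{V\in M_{m,k}\\ V^*V=I_k}}\lambda_{max}\big((I_n\otimes V)^*(-ZZ^*)^\tau(I_n\otimes V)\big).\]
So the whole task reduces to knowing the exact value of $\gamma_k^+$.

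Since the hypothesis $m,n\geq 2$ is precisely the standing assumption under which Theorem \ref{char_gamma_pm} computes $\gamma_k^+$, I will invoke it directly. I then split into the two cases of that theorem.

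\emph{Case 1: $k=1$ or $\textup{rank}(B)\leq 1$.} Theorem \ref{char_gamma_pm} gives $\gamma_k^+=0$, so $k$-copositivity is equivalent to $\alpha\geq 0$.

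\emph{Case 2: $k\geq 2$ and $\textup{rank}(B)\geq 2$.} Theorem \ref{char_gamma_pm} gives $\gamma_k^+=\sigma_1(B)\sigma_2(B)$, so $k$-copositivity is equivalent to $\alpha\geq\sigma_1(B)\sigma_2(B)$.

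There is essentially no obstacle, since the hard work (the eigenvalue computation in Lemma \ref{evals_v*b} and the extremal choice of $V$) is already done. The only point worth checking is that the dichotomy in the corollary matches that of Theorem \ref{char_gamma_pm} exactly. It does: the ``otherwise'' case is the complement of ``$k=1$ or $\textup{rank}(B)\leq 1$'', namely $k\geq2$ and $\textup{rank}(B)\geq 2$. It is also worth noting that the two cases are consistent, since $\sigma_1(B)\sigma_2(B)=0$ whenever $\textup{rank}(B)\leq 1$. Thus the single condition $\alpha\geq 0$ in Case 1 agrees with the formula $\alpha\geq\sigma_1(B)\sigma_2(B)$ there, except when $k=1$ with $\textup{rank}(B)\geq2$, where the theorem genuinely gives $0$.
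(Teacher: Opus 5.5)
Your proposal is correct and matches the paper, which states this corollary without separate proof as an immediate consequence of Theorem~\ref{gamma_pm} ($k$-copositivity holds if and only if $\alpha\geq\gamma_k^+$) together with the evaluation of $\gamma_k^+$ in Theorem~\ref{char_gamma_pm} under the hypothesis $m,n\geq 2$. Your case split mirrors that theorem's dichotomy exactly, so nothing further is needed.
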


\begin{corollary}\label{phi-_kcopos}
Let $\alpha\in \mathbb{R}$ and $B\in M_{m,n}$. The map $\Phi_{\alpha,B}^-$ is $k$-copositive if and only if $\alpha\geq \sigma_1(B)^2$.
\end{corollary}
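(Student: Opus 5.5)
This corollary follows by combining Theorem \ref{gamma_pm} with the value of $\gamma_k^-$ computed in Theorem \ref{char_gamma_pm}. By Theorem \ref{gamma_pm}, $\Phi^-_{\alpha,B}$ is $k$-copositive if and only if $\alpha\geq\gamma_k^-$. Theorem \ref{char_gamma_pm} gives $\gamma_k^-=\sigma_1(B)^2$, and this part carries no dimension hypothesis on $m,n$. Substituting yields the claim.

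Since the corollary rests entirely on the identity $\gamma_k^-=\sigma_1(B)^2$, I would recheck that identity. Fix $V\in M_{m,k}$ with $V^*V=I_k$. Put $C=(I_n\otimes V)^*(\textup{vec}(B)\textup{vec}(B)^*)^\tau(I_n\otimes V)$ and $r=\textup{rank}(V^*B)$. Then
\[\lambda_{max}\bigl((I_n\otimes V)^*(ZZ^*)^\tau(I_n\otimes V)\bigr)=\lambda_{max}(C).\]
The largest eigenvalue of $C$ comes from Lemma \ref{evals_v*b}, case by case:
\begin{enumerate}[(i)]
\item If $r=0$, then $C=0$ and $\lambda_{max}(C)=0=\sigma_1(V^*B)^2$.
\item If $r=1$, then $\lambda_{max}(C)=\sigma_1(V^*B)^2$.
\item If $r\geq 2$, the nonzero eigenvalues are $\sigma_i(V^*B)^2$ and $\pm\sigma_i(V^*B)\sigma_j(V^*B)$ for $i<j$.
\end{enumerate}
In case (iii) the largest is $\sigma_1(V^*B)^2$, since $\sigma_i\sigma_j\leq\sigma_1^2$. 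The lemma's statement lists $\sigma_i(V^*B)$ rather than its square, but its proof shows the eigenvalue is $\sigma_i(G)^2$. So in every case $\lambda_{max}(C)=\sigma_1(V^*B)^2$.

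Next, $\sigma_1(V^*B)\leq\|V^*\|\,\sigma_1(B)=\sigma_1(B)$. Taking the maximum over $V$ therefore gives $\gamma_k^-\leq\sigma_1(B)^2$. For the reverse inequality, let $x_1$ be a left singular vector of $B$ for $\sigma_1(B)$, and choose $V$ with first column $x_1$. Then $V^*B$ has first row $\sigma_1(B)w_1^*$, so $\sigma_1(V^*B)\geq\sigma_1(B)$, and equality holds. This needs $k\leq m$, which is implicit in the existence of $V$.

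The only step needing care is the case analysis of the top eigenvalue, in particular recognizing that the eigenvalue in Lemma \ref{evals_v*b}(iii) is $\sigma_i(V^*B)^2$. Once that is settled, the remaining steps are immediate.
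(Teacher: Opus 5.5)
Your proposal is correct and follows the paper's route: the corollary is read off from Theorem \ref{gamma_pm} together with $\gamma_k^-=\sigma_1(B)^2$ from Theorem \ref{char_gamma_pm}. That theorem's proof likewise takes $\lambda_{max}=\sigma_1(V^*B)^2$ from Lemma \ref{evals_v*b}, bounds $\sigma_1(V^*B)\leq\sigma_1(B)$, and attains equality using a top left singular vector of $B$. You are also right that Lemma \ref{evals_v*b}(iii) should list $\sigma_i(V^*B)^2$ rather than $\sigma_i(V^*B)$, which is what the lemma's proof actually establishes.
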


The first of the two results below follows from Corollaries \ref{phi+_kpos} and \ref{phi+_kcopos} while the second follows from Corollaries \ref{phi-_kpos} and \ref{phi-_kcopos}.
\begin{corollary}\label{phi+_kppt}
Let $\alpha\in \mathbb{R}$ and $B\in M_{m,n}$ with $m,n\geq 2$. The following are equivalent:
\begin{enumerate}[(i)]
\item $\Phi_{\alpha,B}^+$ is $k$-PPT;
\item $\Phi_{\alpha,B}^+$ is $k$-copositive;
\item $\alpha\geq \begin{cases}0,& \textup{if}\ k=1\ \textup{or}\ \textup{rank}(B)\leq 1\\ 
\sigma_1(B)\sigma_2(B),& \textup{otherwise}.\end{cases}$
\end{enumerate}
\end{corollary}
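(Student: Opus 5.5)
The plan is to combine Proposition \ref{equiv} with the two characterizations already obtained for $\Phi^+_{\alpha,B}$. By Proposition \ref{equiv}, $\Phi^+_{\alpha,B}$ is $k$-PPT if and only if it is both $k$-positive and $k$-copositive. Since $m,n\geq 2$, Corollary \ref{phi+_kpos} applies and says that $k$-positivity is equivalent to $\alpha\geq 0$. Corollary \ref{phi+_kcopos} says that $k$-copositivity is equivalent to $\alpha\geq \gamma$, where $\gamma=0$ if $k=1$ or $\textup{rank}(B)\leq 1$, and $\gamma=\sigma_1(B)\sigma_2(B)$ otherwise. The equivalence (ii)$\Leftrightarrow$(iii) is therefore exactly Corollary \ref{phi+_kcopos}.

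For (i)$\Rightarrow$(ii), Proposition \ref{equiv} gives $k$-copositivity directly.

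For (ii)$\Rightarrow$(i), assume $\Phi^+_{\alpha,B}$ is $k$-copositive, so $\alpha\geq\gamma$ by Corollary \ref{phi+_kcopos}. The key observation is that $\gamma\geq 0$ in both cases, because singular values are nonnegative. Hence $\alpha\geq 0$, and Corollary \ref{phi+_kpos} gives $k$-positivity. Together with $k$-copositivity, Proposition \ref{equiv} then yields that $\Phi^+_{\alpha,B}$ is $k$-PPT.

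There is no real obstacle. The only points to check are that the hypothesis $m,n\geq 2$ covers the assumptions of both earlier corollaries, and that the threshold $\gamma$ is nonnegative, so the copositivity condition absorbs the positivity condition.
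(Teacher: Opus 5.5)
Your proposal is correct and follows the same route as the paper. Proposition \ref{equiv} reduces $k$-PPT to $k$-positivity together with $k$-copositivity, Corollaries \ref{phi+_kpos} and \ref{phi+_kcopos} give the two thresholds, and since the copositivity threshold is nonnegative it already implies the positivity condition $\alpha\geq 0$.
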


\begin{corollary}\label{phi-_kppt}
Let $\alpha\in \mathbb{R}$ and $B\in M_{m,n}$. The following are equivalent:
\begin{enumerate}[(i)]
\item $\Phi_{\alpha,B}^-$ is $k$-PPT;
\item $\Phi_{\alpha,B}^-$ is $k$-positive;
\item $\alpha\geq \displaystyle\sum_{j=1}^k\sigma_j(B)^2$.
\end{enumerate}
\end{corollary}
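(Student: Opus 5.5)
The plan is to combine Proposition \ref{equiv} with the two corollaries already established for $\Phi^-_{\alpha,B}$. By Proposition \ref{equiv}, statement (i) is equivalent to $\Phi^-_{\alpha,B}$ being both $k$-positive and $k$-copositive. By Corollary \ref{phi-_kpos}, $k$-positivity holds exactly when $\alpha\geq \sum_{j=1}^k\sigma_j(B)^2$. By Corollary \ref{phi-_kcopos}, $k$-copositivity holds exactly when $\alpha\geq \sigma_1(B)^2$. Hence (i) is equivalent to
\[\alpha\geq \max\Big\{\sum_{j=1}^k\sigma_j(B)^2,\ \sigma_1(B)^2\Big\}.\]

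The key observation is that this maximum is always the first term. Indeed, $k\geq 1$ and every singular value is nonnegative, so
\[\sum_{j=1}^k\sigma_j(B)^2\geq \sigma_1(B)^2.\]
If $k$ exceeds $\min\{m,n\}$, I interpret the missing singular values as $0$; the inequality still holds. Consequently, the $k$-copositivity condition is implied by the $k$-positivity condition, and the two conditions together reduce to (iii).

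The implications then follow directly. (i)$\Rightarrow$(ii) is immediate from Proposition \ref{equiv}. (ii)$\Rightarrow$(iii) is Corollary \ref{phi-_kpos}. For (iii)$\Rightarrow$(i), the assumed bound gives $k$-positivity by Corollary \ref{phi-_kpos}. By the inequality above it also gives $\alpha\geq\sigma_1(B)^2$, hence $k$-copositivity by Corollary \ref{phi-_kcopos}. Proposition \ref{equiv} then yields $k$-PPT.

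I expect no real obstacle here. The only point requiring care is the range of $k$: Proposition \ref{k-cop} was stated for $1\leq k\leq n$, and I must check that Corollaries \ref{phi-_kpos} and \ref{phi-_kcopos} apply for the $k$ in question. If needed, I would note that for $k\geq\min\{m,n\}$, $k$-positivity coincides with $\min\{m,n\}$-positivity, and likewise for copositivity. The sum in (iii) stabilizes at $\sum_j\sigma_j(B)^2=\textup{tr}(BB^*)$, so the statement remains consistent.
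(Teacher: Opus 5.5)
Your proposal is correct and is essentially the paper's argument: the paper obtains this corollary by combining Proposition \ref{equiv} with Corollaries \ref{phi-_kpos} and \ref{phi-_kcopos}, and the point is that $\sum_{j=1}^k\sigma_j(B)^2\geq\sigma_1(B)^2$ makes the copositivity threshold redundant. Your remark on the range $k>\min\{m,n\}$ is a harmless refinement that the paper does not spell out.
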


\section{Singular value inequalities}\label{svi}

In \cite{fulautam}, singular value inequalities of the block matrices in $M_2(M_n)^+$ under the map $\Phi^{\pm}_{\alpha,I_n}$ are given.
In this section, we extend these inequalities using the map $\Phi^{\pm}_{\alpha,B}$.

\begin{lemma} \label{ftlthm5}
	For $U,V \in M_{m,2m}$ and $j \in \{1,\ldots,m\}$, 
	\[ \sigma_j(UU^* + VV^*) \geq 2\sigma_j(UV^*) - \frac{1}{2} \textup{tr}((U-V)(U-V)^*). \]
\end{lemma}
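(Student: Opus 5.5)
The approach is to reduce the inequality to a known singular value inequality for positive semidefinite block matrices, after writing the Hermitian part of $UV^*$ in terms of $UU^*+VV^*$ and $(U-V)(U-V)^*$. The starting point is the identity
\[
(U+V)(U+V)^* = 2(UU^*+VV^*) - (U-V)(U-V)^*,
\]
together with
\[
UV^*+VU^* = \tfrac12\bigl((U+V)(U+V)^* - (U-V)(U-V)^*\bigr).
\]
The plan is to compare $\sigma_j(UV^*)$ with the eigenvalues of $UU^*+VV^*$. Set $D=(U-V)(U-V)^*\succeq 0$ and $\delta=\tfrac12\textup{tr}(D)$.

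First step: use the classical Bhatia--Kittaneh type inequality $2\sigma_j(XY^*)\le \sigma_j(X X^*+YY^*)$, valid for $X,Y\in M_{m,\ell}$ and all $j$. It follows by writing $\begin{bmatrix}X\\ Y\end{bmatrix}\begin{bmatrix}X\\ Y\end{bmatrix}^*\succeq 0$, which has off-diagonal block $XY^*$; the standard result is that $2\sigma_j$ of the off-diagonal block is at most $\sigma_j$ of the sum of the diagonal blocks. Alternatively, one can note that the nonzero eigenvalues of $\begin{bmatrix}X\\ Y\end{bmatrix}\begin{bmatrix}X\\ Y\end{bmatrix}^*$ coincide with those of $X^*X+Y^*Y$ and use the unitary $\tfrac1{\sqrt2}\begin{bmatrix}I&I\\ I&-I\end{bmatrix}$. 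Applied with $X=U$ and $Y=V$, this gives directly
\[
2\sigma_j(UV^*)\le \sigma_j(UU^*+VV^*)\le \sigma_j(UU^*+VV^*)+\delta,
\]
which is already stronger than the claimed bound, since $\delta\ge 0$.

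If that cited inequality is deemed unavailable, I would prove it in place via the dimension hypothesis $U,V\in M_{m,2m}$. I would compare $UV^*$ with $\tfrac12(U+V)(U+V)^*$. Writing $UV^* = \tfrac14\bigl[(U+V)(U+V)^*-(U-V)(U-V)^*\bigr] + \tfrac14\bigl[(U-V)(U+V)^*-(U+V)(U-V)^*\bigr]$ expresses it as a Hermitian part plus a skew part. Weyl-type perturbation $\sigma_j(A+E)\le\sigma_j(A)+\|E\|$ then bounds the error terms involving $U-V$ by $\sigma_1$ of those terms, which is at most $\textup{tr}$-type quantities. Combining this with $\tfrac12(U+V)(U+V)^*\preceq UU^*+VV^*$ gives $2\sigma_j(UV^*)\le \sigma_j(UU^*+VV^*)+c\,\textup{tr}(D)$.

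The main obstacle in this fallback is getting exactly the constant $\tfrac12$ in front of $\textup{tr}(D)$. The skew error term $\tfrac14[(U-V)(U+V)^*-\cdots]$ is not controlled by $D$ alone, so I would first try the block-matrix route of the first step. I expect that route to settle the lemma with room to spare.
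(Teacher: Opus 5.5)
\textbf{There is a genuine gap: your first step rests on a false inequality.} The Bhatia--Kittaneh inequality (Theorem IX.4.2 in Bhatia's \emph{Matrix Analysis}, which the paper uses) reads $2\sigma_j(UV^*)\le\sigma_j(U^*U+V^*V)$. Its right side is the $2m\times 2m$ Gram sum, not $UU^*+VV^*$. The version $2\sigma_j(XY^*)\le\sigma_j(XX^*+YY^*)$ that you state is false. So is the ``standard result'' that $2\sigma_j(X)\le\sigma_j(A+C)$ for every positive semidefinite $\begin{bmatrix}A&X\\X^*&C\end{bmatrix}$.

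To see this, take $m\ge 2$, a unit vector $f\in\mathbb{C}^{2m}$, and set $U=e_1f^*$, $V=e_2f^*$. Then $UV^*=e_1e_2^*$ and $UU^*+VV^*=e_1e_1^*+e_2e_2^*$. Hence $2\sigma_1(UV^*)=2>1=\sigma_1(UU^*+VV^*)$. Meanwhile $\tfrac12\textup{tr}((U-V)(U-V)^*)=1$, so the lemma holds with equality at $j=1$. The trace term is therefore sharp and cannot be discarded, and no argument settles the lemma ``with room to spare.'' This failure for general positive semidefinite blocks is why the correction term, and the hypothesis $|\alpha|\ge\frac12\sigma_1(B)^2$ in Theorem \ref{AM_plus} that absorbs it, appear at all. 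Your own alternative justification exposes the slip: the nonzero eigenvalues of $\begin{bmatrix}X\\Y\end{bmatrix}\begin{bmatrix}X\\Y\end{bmatrix}^*$ are those of $X^*X+Y^*Y$, not of $XX^*+YY^*$.

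\textbf{The missing idea is a bridge from $U^*U+V^*V$ back to $UU^*+VV^*$.} The paper builds it from the identity you wrote down together with its counterpart on the other side:
\begin{itemize}
\item Since $2(UU^*+VV^*)-(U+V)(U+V)^*=(U-V)(U-V)^*\succeq 0$, and $(U+V)(U+V)^*$ and $(U+V)^*(U+V)$ share their top $m$ eigenvalues, you get the first inequality below.
\item Since $(U+V)^*(U+V)=2(U^*U+V^*V)-(U-V)^*(U-V)$, Weyl's inequality gives the second.
\end{itemize}
Together,
\[
\sigma_j(UU^*+VV^*)\ge\tfrac12\sigma_j((U+V)^*(U+V))\ge\sigma_j(U^*U+V^*V)-\tfrac12\sigma_1((U-V)^*(U-V)).
\]
Then $\sigma_1((U-V)^*(U-V))\le\textup{tr}((U-V)(U-V)^*)$ and the correct Bhatia--Kittaneh inequality finish the proof.

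\textbf{Your fallback fails for the reason you suspected.} Perturbing $UV^*$ directly leaves the skew term $\tfrac14[(U-V)(U+V)^*-(U+V)(U-V)^*]$. Its norm is of order $\|U-V\|\,\|U+V\|$ and is not bounded by any multiple of $\textup{tr}(D)$. For example, with $V=te_1f^*$ and $U=V+\epsilon e_2f^*$, its norm is $t\epsilon/2$ while $\textup{tr}(D)=\epsilon^2$. Perturbing the Gram matrices instead makes the error $(U-V)^*(U-V)$ positive semidefinite with no skew part, which is what delivers the constant $\tfrac12$.
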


\begin{proof}
	Let $U,V \in M_{m,2m}$ and $j \in \{1,\ldots,m\}$. 
	Observe that
	\[ 2\sigma_j(UU^* + VV^*) \geq \sigma_j((U+V)(U+V)^*) \] 
	since $2(UU^* + VV^*) - (U+V)(U+V)^* = (U-V)(U-V)^* \in M_m^+$ and $2(UU^* + VV^*), (U+V)(U+V)^* \in M_m^+$.
	By \cite[Theorem 8.13]{zhang}, 
	\begin{align*}
		& 2\sigma_j(U^*U + V^*V) \\
						&= \sigma_j(2(U^*U + V^*V) - (U-V)^*(U-V) +(U-V)^*(U-V)) \\
						&\leq \sigma_j (2(U^*U + V^*V) - (U-V)^*(U-V)) + \sigma_1((U-V)^*(U-V)).
	\end{align*}
	Now, $(U-V)^*(U-V) \in M_{2m}^+$ and so
	\[ \sigma_1((U-V)^*(U-V)) \leq \text{tr}((U-V)^*(U-V)). \]
	Meanwhile, by \cite[Theorem IX.4.2]{bhatia1},
	\[ 2\sigma_j(UV^*) \leq \sigma_j(U^*U + V^*V). \]
	From these four inequalities, it follows that
	\begin{align*}
		\sigma_j(UU^* + VV^*) &\geq \dfrac{1}{2} \sigma_j((U+V)(U+V)^*) \\
						&= \frac{1}{2} \sigma_j((U+V)^*(U+V)) \\
						&= \frac{1}{2} \sigma_j(2(U^* U + V^* V) - (U-V)^* (U-V)) \\
						&\geq \sigma_j(U^*U + V^*V) - \frac{1}{2} \sigma_1((U-V)^*(U-V)) \\
						&\geq \sigma_j(U^*U + V^*V) - \frac{1}{2} \text{tr}((U-V)^*(U-V)) \\
						&\geq 2\sigma_j(UV^*) - \frac{1}{2} \text{tr}((U-V)^*(U-V)) \\
						&= 2\sigma_j(UV^*) - \frac{1}{2} \text{tr}((U-V)(U-V)^*). 
	\end{align*}
\end{proof}

\begin{theorem} \label{AM_plus}
  Let $\Phi^{+}_{\alpha,B}$ be as in Definition \ref{phi_pm} and $\begin{bmatrix} A & X \\ X^* & C \end{bmatrix} \in M_2(M_n)^+$. 
  If $|\alpha| \geq \dfrac{1}{2}\sigma_1(B)^2$, then for $j \in \{ 1,\ldots,m \}$,
	\begin{align*}
		2\sigma_{j} (\Phi^{+}_{\alpha,B}(X)) \leq 2 (\sigma_j(BXB^*) + |\alpha| |\textup{tr} X|) \leq \sigma_j(\Phi^{+}_{|\alpha|,B}(A + C)).
	\end{align*}
\end{theorem}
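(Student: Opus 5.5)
The plan is to prove the two inequalities separately. The first is a perturbation bound. The second comes from Lemma \ref{ftlthm5}, with the hypothesis $|\alpha|\geq \frac12\sigma_1(B)^2$ used to absorb the defect term $\frac12\textup{tr}((U-V)(U-V)^*)$.

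\emph{First inequality.} Write $\Phi^+_{\alpha,B}(X)=BXB^*+\alpha\,\textup{tr}(X)I_m$. By Weyl's singular value inequality $\sigma_j(P+Q)\leq \sigma_j(P)+\sigma_1(Q)$ (e.g.\ \cite[Theorem 8.13]{zhang}), we get $\sigma_j(\Phi^+_{\alpha,B}(X))\leq \sigma_j(BXB^*)+|\alpha||\textup{tr}X|$. Multiplying by $2$ gives the left inequality.

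\emph{Reduction for the second inequality.} Both $\sigma_j(BXB^*)$ and $|\textup{tr}X|$ are unchanged if $X$ is replaced by $e^{i\theta}X$. The block matrix stays positive semidefinite under conjugation by $I_n\oplus e^{-i\theta}I_n$, and $A+C$ is unchanged. So we may assume $\textup{tr}X=|\textup{tr}X|\geq 0$. Since $A+C\succeq 0$, the right-hand side equals $|\alpha|\textup{tr}(A+C)+\sigma_j(B(A+C)B^*)$. We also record $2|\textup{tr}X|\leq \textup{tr}(A+C)$, which holds because $|x_{ii}|\leq\sqrt{a_{ii}c_{ii}}\leq \frac12(a_{ii}+c_{ii})$.

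\emph{Main step.} Consider
\[M=(I_2\otimes B)\begin{bmatrix} A & X\\ X^* & C\end{bmatrix}(I_2\otimes B)^*=\begin{bmatrix} BAB^* & BXB^*\\ BX^*B^* & BCB^*\end{bmatrix}\in M_{2m}^+ .\]
Factor $M=RR^*$ with $R\in M_{2m}$, and split $R=\begin{bmatrix}U\\ V\end{bmatrix}$ with $U,V\in M_{m,2m}$. Then $UV^*=BXB^*$, $UU^*+VV^*=B(A+C)B^*$, and
\[(U-V)(U-V)^*=B(A+C-X-X^*)B^*.\]
Lemma \ref{ftlthm5} then gives
\[\sigma_j(B(A+C)B^*)\geq 2\sigma_j(BXB^*)-\tfrac12\,\textup{tr}\big(B(A+C-X-X^*)B^*\big).\]
The matrix $A+C-X-X^*=[I\ -I]\begin{bmatrix} A&X\\X^*&C\end{bmatrix}[I\ -I]^*$ is positive semidefinite. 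Hence the trace term is at most $\sigma_1(B)^2\,\textup{tr}(A+C-X-X^*)=\sigma_1(B)^2(\textup{tr}(A+C)-2|\textup{tr}X|)$, using the normalization $\textup{tr}X\geq 0$. By the hypothesis and the nonnegativity of $\textup{tr}(A+C)-2|\textup{tr}X|$, this defect is bounded by $|\alpha|(\textup{tr}(A+C)-2|\textup{tr}X|)$. Adding $|\alpha|\textup{tr}(A+C)$ to both sides yields
\[\sigma_j(\Phi^+_{|\alpha|,B}(A+C))\geq 2\sigma_j(BXB^*)+2|\alpha||\textup{tr}X|.\]

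The main obstacle is this last step: identifying the correct factorization so that Lemma \ref{ftlthm5} applies, and controlling the defect term by exactly the slack $|\alpha|(\textup{tr}(A+C)-2|\textup{tr}X|)$. The phase normalization of $X$ is essential for this, since it turns $\textup{Re}\,\textup{tr}X$ into $|\textup{tr}X|$.
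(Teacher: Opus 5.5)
Your proof is correct and follows the paper's argument. It uses the same Weyl-type bound for the first inequality, the same phase normalization of $X$, the same factorization of $(I_2\otimes B)\begin{bmatrix} A & X\\ X^* & C\end{bmatrix}(I_2\otimes B)^*$ into $U,V\in M_{m,2m}$ fed into Lemma \ref{ftlthm5}, and the same absorption of the defect term via $\textup{tr}(BPB^*)\le\sigma_1(B)^2\,\textup{tr}(P)$ together with $|\alpha|\ge\frac12\sigma_1(B)^2$; the only differences are cosmetic (you check $A+C-X-X^*\succeq 0$ by compressing with $[I\ \ -I]$ instead of through the auxiliary factorization $E,F$, and you obtain $2|\textup{tr}X|\le\textup{tr}(A+C)$ from $2\times 2$ principal minors rather than from complete positivity of the trace).
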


\begin{proof}
	Let $j \in \{ 1,\ldots,m \}$.
	Using \cite[Theorem 8.13]{zhang}, we obtain that
	\begin{align*}
		\sigma_j(\Phi^{+}_{\alpha,B}(X)) &= \sigma_j(\alpha(\text{tr} X) I_m + BXB^*) \\
						&\leq \sigma_j(BXB^*) + \sigma_1(\alpha(\text{tr} X)I_m) \\
						&= \sigma_j(BXB^*) + |\alpha||\text{tr} X|
	\end{align*}
	and the first inequality immediately follows.
	
	For the second inequality, choose $\theta \in \mathbb{R}$ such that $\text{tr}(e^{i\theta} X) =|\textup{tr}(X)|$. 
	By the assumption, there exist $E,F \in M_{n,2n}$ such that
	\begin{equation*}	
		\begin{bmatrix} A & e^{i\theta} X \\ e^{-i\theta} X^* & C \end{bmatrix} = \begin{bmatrix} E \\ F \end{bmatrix} \begin{bmatrix} E \\ F \end{bmatrix}^* = \begin{bmatrix} EE^* & EF^* \\ FE^* & FF^* \end{bmatrix}
	\end{equation*}
	and there exist $U,V \in M_{m,2m}$ such that
	\begin{equation} \label{equationThm5.2}
			\begin{bmatrix} BAB^* & e^{i\theta} BXB^* \\ e^{-i\theta} BX^*B^* & BCB^* \end{bmatrix} = \begin{bmatrix} U \\ V \end{bmatrix} \begin{bmatrix} U \\ V \end{bmatrix}^* = \begin{bmatrix} UU^* & UV^* \\ VU^* & VV^* \end{bmatrix}.
	\end{equation}
	Observe that
	\begin{align*}
		\text{tr}((U-V)(U-V)^*) &= \text{tr}( UU^* - UV^* - VU^* + VV^*) \\
						&= \text{tr} ( BAB^* - e^{i\theta} BXB^* - e^{-i\theta} BX^*B^* + BCB^* ) \\
						&= \text{tr} ( B(A - e^{i\theta}X - e^{-i\theta}X^* +V)B^*) \\
						&= \text{tr} ( B( EE^* - EF^* - FE^* + FF^*)B^*) \\
						&= \text{tr} ( B(E-F)(E-F)^* B^*).
	\end{align*}
	Since $\sigma_1(B)^2 I_n - B^*B \in M_n^+$ and $(E-F)(E-F)^* \in M_n^+$, it follows that
	\begin{align*}
		\text{tr}((U-V)(U-V)^*) &= \text{tr} ( B(E-F)(E-F)^* B^*) \\
						&\leq \sigma_1(B)^2 \text{tr} ( (E-F)(E-F)^*) \\
						&= \sigma_1(B)^2 \text{tr}(A - e^{i\theta}X - e^{-i\theta} X^* + C) \\
						&= \sigma_1(B)^2 ( \text{tr}(A+C) - 2 \text{Re}(\text{tr}(e^{i\theta}X)) ) \\
						&= \sigma_1(B)^2 ( \text{tr}(A+C) - 2|\text{tr} X| ).
	\end{align*}
	By Lemma \ref{ftlthm5} applied to \eqref{equationThm5.2} and the above inequality,
	\begin{align*}
		\sigma_j(BAB^* + BCB^*) &= 2 \sigma_j(UU^* + VV^*) \\
						&\geq 2 \sigma_j(UV^*) - \frac{1}{2} \text{tr}((U-V)(U-V)^*) \\
						&\geq 2 \sigma_j (BXB^*) - \sigma_1(B)^2 \left( \frac{1}{2} \text{tr}(A+C) - |\text{tr}( X)| \right).
	\end{align*}
	As a consequence,
	\begin{align*}
		& \sigma_j(\Phi^+_{|\alpha|,B}(A+C)) \\
						&= \sigma_j(|\alpha| \text{tr}(A+C)I_m + BAB^* + BCB^*) \\
						&= \sigma_j(BAB^* + BCB^*) + |\alpha| \text{tr}(A+C) \\
						&\geq 2 \sigma_j (BXB^*) -  \sigma_1(B)^2 \left( \frac{1}{2} \text{tr}(A+C) - |\text{tr} (X)| \right) + |\alpha| \text{tr}(A+C) \\
						&= 2\sigma_j(BXB^*) + 2|\alpha| |\text{tr}X| + (2|\alpha| - \sigma_1(B)^2)  \left( \frac{1}{2} \text{tr}(A+C) - |\text{tr} X| \right).
	\end{align*}
	Recall that the trace map is completely positive (see \cite[Section 2.2.3]{watrous}), and so $\begin{bmatrix} \textup{tr}( A) & \textup{tr}(X )\\ \textup{tr}(X^*) & \textup{tr}( C )\end{bmatrix} \in M_2^+$. Together with the AM-GM inequality, it follows that
	\[ |\textup{tr} X| \leq \sqrt{\text{tr} A \, \textup{tr} C} \leq \frac{1}{2} \textup{tr}(A+C). \]
	Hence, $\dfrac{1}{2} \text{tr}(A+C) - |\text{tr} X| \geq 0 $ and since $2 |\alpha| - \sigma_1(B)^2 \geq 0$ by assumption, 
	\begin{align*}
		\sigma_j(\Phi^+_{|\alpha|,B}(A+C)) \geq 2 \sigma_j(BXB^*) + 2|\alpha| |\text{tr}(X)|.
	\end{align*}\end{proof}

Setting $B = I_n$ in Theorem \ref{AM_plus} and allowing only $\alpha\geq 0$ yields \cite[Theorem 5]{fulautam}. 

Now, we consider generalizations of \cite[Proposition 2]{fulautam} and \cite[Remark 1]{fulautam}. We first prove the following result.

\begin{lemma} \label{lem5.3}
	Let $\alpha \in \mathbb{R}$ and $B \in M_{m,n}$. 
	Define $\Gamma: M_n \to M_m$ by
	\[ \Gamma(Z) = \textup{tr} \left( G Z \right) I_m - (B Z B^*)^\top \text{ for all } Z \in M_n \]
    where $G=\dfrac{2\alpha I_n + B^*B}{m+2}.$ If $\alpha \geq \dfrac{m+1}{2} \sigma_1(B)^2$, then the map $\Gamma$ is 2-positive.
\end{lemma}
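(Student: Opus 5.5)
The plan is to prove more than is asked: I will show that $\Gamma$ is completely positive, which in particular gives $2$-positivity. The idea is to split $\Gamma$ into two pieces, each visibly completely positive. The hypothesis $\alpha\geq \frac{m+1}{2}\sigma_1(B)^2$ will be used to make the coefficient matrix of the leftover trace term positive semidefinite.

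\emph{Step 1 (decomposition).} Since $\textup{tr}(BZB^*)=\textup{tr}(B^*BZ)$, I will write
\[
\Gamma(Z)=\textup{tr}\bigl((G-B^*B)Z\bigr)I_m+\Bigl[\textup{tr}(BZB^*)I_m-(BZB^*)^\top\Bigr].
\]
Here
\[
G-B^*B=\dfrac{2\alpha I_n-(m+1)B^*B}{m+2}=:H .
\]
Because $B^*B\leq \sigma_1(B)^2 I_n$, the hypothesis $2\alpha\geq (m+1)\sigma_1(B)^2$ gives $H\in M_n^+$.

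\emph{Step 2 (the first term is completely positive).} The map $Z\mapsto \textup{tr}(HZ)I_m$ equals $Z\mapsto \textup{tr}(H^{1/2}ZH^{1/2})I_m$. This is the congruence $Z\mapsto H^{1/2}ZH^{1/2}$, followed by the trace (completely positive, as already used in the proof of Theorem \ref{AM_plus}), followed by $t\mapsto tI_m$. Each of these is completely positive, so their composition is completely positive.

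\emph{Step 3 (the second term is completely positive).} The second term is $\Theta\circ\Lambda$, where $\Lambda(Z)=BZB^*$ is completely positive and $\Theta(Y)=\textup{tr}(Y)I_m-Y^\top$ on $M_m$. The Choi matrix of $\Theta$ is
\[
C(\Theta)=[\delta_{ij}I_m-E_{ji}]_{i,j=1}^m=I_{m^2}-S,
\]
where $S=\sum_{i,j}E_{ij}\otimes E_{ji}$ is the swap operator. Since $S$ is a self-adjoint unitary, its eigenvalues are $\pm1$, so $I-S=2P_{\textup{antisym}}\in M_{m^2}^+$. By Choi's theorem $\Theta$ is completely positive, hence so is $\Theta\circ\Lambda$.

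\emph{Step 4 (conclusion).} A sum of completely positive maps is completely positive, so $\Gamma$ is completely positive and in particular $2$-positive.

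The only step needing care is Step 3, namely recognizing that $\textup{tr}(Y)I-Y^\top$ is completely positive via the swap operator. Once that is in place, everything else is bookkeeping with the inequality $B^*B\leq\sigma_1(B)^2I_n$.
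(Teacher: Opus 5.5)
Your proof is correct and follows essentially the same route as the paper: the same splitting $\Gamma(Z)=\textup{tr}((G-B^*B)Z)I_m+[\textup{tr}(BZB^*)I_m-(BZB^*)^\top]$, positivity of $G-B^*B$ from $B^*B\leq\sigma_1(B)^2I_n$, and treating the second piece as $Y\mapsto\textup{tr}(Y)I_m-Y^\top$ composed with $Z\mapsto BZB^*$. The only difference is in that second piece: the paper identifies the map as $(m-1)$ times the Werner--Holevo channel and treats $m=1$ separately, whereas you check its complete positivity directly from the Choi matrix $I_{m^2}-S$, which covers $m=1$ uniformly and gives complete positivity rather than just $2$-positivity.
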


\begin{proof}
	First, observe that if a linear map $f : M_m \to M_k$ is 2-positive, then for any $R \in M_{m,n}$, the map $f_R: M_n \to M_k$ given by $f_R(Z) = f(RZR^*)$ is also 2-positive. 

Now, for all $Z \in M_n$,
	\begin{align*}
		\Gamma(Z) &= \text{tr}(GZ) I_m - (B Z B^*)^\top\\
						&= \text{tr}(GZ - B^*BZ + B^*BZ) I_m - (B Z B^*)^\top \\
						&= \text{tr}((G-B^*B)Z) I_m + \text{tr}(BZB^*) I_m - (B Z B^*)^\top\\
						&= \Gamma_1(Z)+\Gamma_2(Z),
	\end{align*}
	where $\Gamma_1(Z) = \text{tr}((G-B^*B)Z) I_m$ and $\Gamma_2(Z) = \text{tr}(BZB^*) I_m - (BZB^*)^\top$.
	To prove that $\Gamma$ is $2$-positive, it suffices to show that $\Gamma_1$ and $\Gamma_2$ are 2-positive.
	
	By assumption, $\alpha \geq \dfrac{m+1}{2} \sigma_1(B)^2$, and so $\alpha I_n - \dfrac{m+1}{2} B^*B \in M_n^+$. Hence,
	\[ G - B^*B = \frac{2\alpha I_n - (m+1)B^*B}{m+2} = \frac{2}{m+2} \left( \alpha I_n - \frac{m+1}{2} B^*B \right) \in M_n^+. \]
	Thus, $\Gamma_1$ is also 2-positive (see \cite[Section 2.2.3]{watrous}). If $m=1$, then $\Gamma_2=0$; if $m>1$, then $\Gamma_2(Z)=(m-1)W(BZB^*)$ where $W(X) = \dfrac{1}{m-1} (\text{tr}(X) I_m - X^\top)$ is one of the Werner-Holevo channels (see \cite[Example 3.36]{watrous}). In either case, $\Gamma_2$ is 2-positive.
\end{proof}

\begin{theorem} \label{AM_minus}
  Let $\Phi^{-}_{\alpha,B}$ be as in Definition \ref{phi_pm} and $\begin{bmatrix} A & X \\ X^* & C \end{bmatrix} \in M_2(M_n)^+$. 
  If $\alpha \geq \dfrac{m+1}{2} \sigma_1(B)^2$, then for $j \in \{ 1,\ldots,m \}$,
	\begin{align*}
		2\sigma_{j} (\Phi^{-}_{\alpha,B}(X)) \leq \sigma_j(\Phi^{-}_{\alpha,B}(A + C)).
	\end{align*}
\end{theorem}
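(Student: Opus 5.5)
The plan is to follow \cite[Proposition 2]{fulautam}, with Lemma \ref{lem5.3} playing the role of the Werner--Holevo type map used there. Fix $j\in\{1,\ldots,m\}$ and set $P=\Phi^-_{\alpha,B}(A)$, $Q=\Phi^-_{\alpha,B}(C)$ and $Y=\Phi^-_{\alpha,B}(X)$.

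The first step is to show that
\[\begin{bmatrix} P & Y\\ Y^* & Q\end{bmatrix}\in M_2(M_m)^+ .\]
This would follow if $\Phi^-_{\alpha,B}$ were $2$-positive. By Corollary \ref{phi-_kpos}, that holds when $\alpha\geq \sigma_1(B)^2+\sigma_2(B)^2$. The hypothesis $\alpha\geq \frac{m+1}{2}\sigma_1(B)^2$ implies this whenever $m\geq 3$. For $m\le 2$, however, the hypothesis is weaker, so we should not rely on Corollary \ref{phi-_kpos}.

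Instead, we route this step through Lemma \ref{lem5.3}. The idea is to relate $\Phi^-_{\alpha,B}$ to the $2$-positive map $\Gamma$ composed with transposes. Note that $\Gamma(Z^\top)^\top=\operatorname{tr}(G^\top Z)I_m-BZB^*$. The trace part $\operatorname{tr}(GZ^\top)I_m$ differs from $\alpha\operatorname{tr}(Z)I_m$ by a completely positive map $Z\mapsto \operatorname{tr}(HZ)I_m$ with $H\geq 0$, at least when $\alpha$ is large enough. So we want to write
\[\Phi^-_{\alpha,B}=\mathbf{T}\circ\Gamma\circ\mathbf{T}+(\text{completely positive}).\]
The concern is that conjugating by $\mathbf{T}$ on both sides preserves $k$-positivity, but does it? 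For block matrices, $(I_2\otimes \mathbf{T}\circ\Gamma\circ \mathbf{T})(M)$ equals $\big((I_2\otimes\Gamma)(M^{\Gamma})\big)^{\Gamma}$, which is not obviously positive. If this route fails, the fallback is the direct approach of \cite{fulautam}: apply Lemma \ref{lem5.3} to the PSD block matrix itself, in the role the Werner--Holevo map plays there. That gives
\[\begin{bmatrix}\Gamma(A)&\Gamma(X)\\ \Gamma(X^*)&\Gamma(C)\end{bmatrix}\geq 0 .\]
Then relate $\sigma_j(\Gamma(X))$ to $\sigma_j(Y)$, using that transposition preserves singular values. We also relate $\Gamma(A)+\Gamma(C)$ to $\Phi^-_{\alpha,B}(A+C)$ by bounding the trace term $\operatorname{tr}(G(A+C))\leq \frac{2\alpha+\sigma_1(B)^2}{m+2}\operatorname{tr}(A+C)$. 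Controlling this trace mismatch, with the constant $m+2$ coming from $G$, is where $\alpha\geq\frac{m+1}{2}\sigma_1(B)^2$ must be used, and I expect it to be the main obstacle.

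Once a PSD block matrix $\begin{bmatrix} P' & Y'\\ Y'^* & Q'\end{bmatrix}$ is in hand, with $\sigma_j(Y')=\sigma_j(Y)$ and $P'+Q'\le \Phi^-_{\alpha,B}(A+C)$ in the Loewner order, the finish is standard. \cite[Theorem IX.4.2]{bhatia1}-type results (Tao's / Bhatia--Kittaneh inequality for positive block matrices) give $2\sigma_j(Y')\le \sigma_j(P'+Q')$. Weyl monotonicity then yields $\sigma_j(P'+Q')\le\sigma_j(\Phi^-_{\alpha,B}(A+C))$, and hence
\[2\sigma_j(\Phi^-_{\alpha,B}(X))\le \sigma_j(\Phi^-_{\alpha,B}(A+C)).\]
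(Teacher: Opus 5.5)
Your plan has a genuine gap, and it sits in the step you call standard. For a positive block matrix $\begin{bmatrix} P' & Y'\\ Y'^* & Q'\end{bmatrix}\in M_2(M_m)^+$, the inequality $2\sigma_j(Y')\le\sigma_j(P'+Q')$ is false in general.

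\begin{itemize}
\item Take $P'=E_{11}$, $Y'=E_{12}$, $Q'=E_{22}$ in $M_2$. The block matrix equals $vv^*$ for $v=e_1\otimes e_1+e_2\otimes e_2$, yet $2\sigma_1(Y')=2>1=\sigma_1(P'+Q')$.
\item It fails even for PPT blocks. Take $P'=Q'=\textup{diag}(1,\epsilon)$ and $Y'=\sqrt{\epsilon}\,(E_{12}+E_{21})$ with $0<\epsilon<1$. Then $2\sigma_2(Y')=2\sqrt{\epsilon}>2\epsilon=\sigma_2(P'+Q')$. So the $2$-PPT property from Corollary \ref{phi-_kppt} would not rescue your first route for $m\ge 3$ either.
\end{itemize}

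Tao's inequality bounds $2\sigma_j(Y')$ by $\sigma_j$ of the whole block matrix. Bhatia--Kittaneh, with $P'=EE^*$, $Q'=FF^*$, $Y'=EF^*$, bounds it by $\sigma_j(E^*E+F^*F)$, not by $\sigma_j(EE^*+FF^*)$. Closing exactly this discrepancy is what Lemma \ref{ftlthm5} and Theorem \ref{AM_plus} do. The defect $\frac12\textup{tr}((U-V)(U-V)^*)$ must be absorbed by a scalar term $\alpha\,\textup{tr}(\cdot)I_m$ with $\alpha\ge\frac12\sigma_1(B)^2$. A Loewner bound $P'+Q'\le\Phi^-_{\alpha,B}(A+C)$ cannot replace it.

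Your fallback has a second defect. Taking $Y'=\Gamma(X)$ does not give $\sigma_j(Y')=\sigma_j(\Phi^-_{\alpha,B}(X))$, because $\Gamma(X)^\top=\textup{tr}(GX)I_m-BXB^*$ carries the scalar $\textup{tr}(GX)$ rather than $\alpha\,\textup{tr}(X)$.

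Also, your formula should read $\Gamma(Z^\top)^\top=\textup{tr}(G^\top Z)I_m-BZ^\top B^*$. What is actually available is $\Gamma(Z)^\top=\textup{tr}(GZ)I_m-BZB^*$, i.e.\ $\mathbf{T}\circ\Gamma$, and this need not be $2$-positive. For $B=I_n$, $m=n\ge2$, $\alpha=\frac{n+1}{2}$ it equals $\Phi^-_{1,I_n}$, which is not $2$-positive.

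The missing idea is to post-compose $\Gamma$ with $\Phi^+_{\frac12,I_m}$ instead of comparing $\Gamma$ with $\Phi^-_{\alpha,B}$ directly. Since $\textup{tr}(\Gamma(Z))=m\,\textup{tr}(GZ)-\textup{tr}(B^*BZ)$ and $(m+2)G-B^*B=2\alpha I_n$, one gets the exact identity
\[\Phi^+_{\frac12,I_m}(\Gamma(Z))=\Phi^-_{\alpha,B}(Z)^\top\quad\text{for all } Z\in M_n .\]
So the trace mismatch you expected to be the main obstacle vanishes identically; this is precisely why $G$ is normalized by $m+2$.

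The proof then closes as follows:
\begin{itemize}
\item Lemma \ref{lem5.3} makes $\begin{bmatrix}\Gamma(A)&\Gamma(X)\\ \Gamma(X)^*&\Gamma(C)\end{bmatrix}$ positive.
\item Theorem \ref{AM_plus}, applied with $B=I_m$ and $\alpha=\frac12=\frac12\sigma_1(I_m)^2$, gives $2\sigma_j(\Phi^+_{\frac12,I_m}(\Gamma(X)))\le\sigma_j(\Phi^+_{\frac12,I_m}(\Gamma(A+C)))$.
\item Transposition preserves singular values, which finishes the proof.
\end{itemize}
The hypothesis $\alpha\ge\frac{m+1}{2}\sigma_1(B)^2$ enters only through the $2$-positivity of $\Gamma$, which you correctly identified as the key input.
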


\begin{proof}
	Let $\Gamma$ be as in Lemma \ref{lem5.3}, and let $G = \dfrac{2\alpha I_n + B^*B}{m+2}$.
	Observe that for all $Z \in M_n$,
	\begin{align*}
		\Phi^+_{\frac{1}{2},I_m}(\Gamma(Z)) &= \frac{1}{2} \text{tr}(\Gamma(Z)) I_m + \Gamma(Z) \\
						&= \frac{1}{2} \text{tr} \left( \text{tr}(GZ) I_m - (B Z B^*)^\top \right) I_m + \text{tr}(GZ) I_m - (B Z B^*)^\top \\
						&= \left[ \frac{m}{2} \text{tr}(GZ) - \frac{1}{2} \text{tr}(  B Z B^* ) + \text{tr}(GZ) \right] I_m -  (B Z B^* )^\top \\
						&= \text{tr} \left( \frac{(m+2)GZ - B^*BZ}{2} \right) I_m - (B Z B^* )^\top \\
						&= \text{tr} \left( \frac{(2\alpha I_n + B^*B)Z - B^*BZ}{2} \right) I_m - (B Z B^* )^\top \\
						&= \Phi^-_{\alpha,B}(Z) ^\top.
	\end{align*}
	Since $\alpha \geq \dfrac{m+1}{2} \sigma_1(B)^2$, Lemma \ref{lem5.3} implies that $\Gamma$ is 2-positive. In particular,  
	\[ \begin{bmatrix} \Gamma(A) & \Gamma(X) \\ \Gamma(X)^* & \Gamma(C) \end{bmatrix} \in M_2(M_m)^+. \] 
	By Theorem \ref{AM_plus} applied to the map $\Phi^+_{\frac{1}{2},I_m}$ and the above matrix,
	\[ 2 \sigma_j(\Phi^+_{\frac{1}{2},I_m}(\Gamma(X))) \leq \sigma_j ( \Phi^+_{\frac{1}{2},I_m}(\Gamma(A)+\Gamma(C)))=\sigma_j ( \Phi^+_{\frac{1}{2},I_m}(\Gamma(A+C)))  \]
	or
	\[ 2 \sigma_j(  \Phi^-_{\alpha,B}(X) )=2 \sigma_j(  \Phi^-_{\alpha,B}(X) ^\top ) \leq \sigma_j(  \Phi^-_{\alpha,B}(A+C) ^\top )=\sigma_j( \Phi^-_{\alpha,B}(A+C)  ) \]
	for each $j \in \{1,\ldots,m\}$. 
\end{proof}

The geometric mean of positive definite matrices $A,C \in M_n$ is defined by \[ A \sharp C = A^{\frac{1}{2}} (A^{-\frac{1}{2}} C A^{-\frac{1}{2}})^{\frac{1}{2}} A^{\frac{1}{2}}. \]
For general $A,C \in M_n^+$, the notion of geometric mean is uniquely extended as the following limit in the strong operator topology: $ A \sharp C = \lim\limits_{\epsilon \to 0^+} (A+\epsilon I_n) \sharp (C+\epsilon I_n).$
Properties of the geometric mean such as $A \sharp C = C \sharp A$ \cite[Theorem 4.1.3(i)]{bhatia2} and $\frac{1}{2}(A+C) - A \sharp C \in M_n^+$ \cite[Inequality (4.16)]{bhatia2} will be used to improve the inequalities in Theorems \ref{AM_plus} and \ref{AM_minus}. 

\begin{theorem} \label{GM}
	Let $\Phi^{\pm}_{\alpha,P}$ and $\Phi^{+}_{\beta,Q}$ be as in Definition \ref{phi_pm} such that $P \in M_{m,n}$, $Q \in M_m$, and $m,n \geq 2$.
	Let $\begin{bmatrix} A & X \\ X^* & C \end{bmatrix} \in M_2(M_n)^+$.
	If $\alpha \geq \alpha^\pm$, where \[ \alpha^+ = \sigma_1(P) \sigma_2(P)  \text{ and } \alpha^- = \sigma_1(P)^2 + \sigma_2(P)^2, \] and $\beta \geq \dfrac{1}{2} \sigma_1(Q)^2$, then for $j \in \{1,\ldots,m\}$,
	\[ \sigma_j ( \Phi^{+}_{\beta,Q} ( \Phi^{\pm}_{\alpha,P}(X) ) ) \leq \lambda_j( \Phi^{+}_{\beta,Q} ( \Phi^{\pm}_{\alpha,P} (A) )  \sharp \Phi^{+}_{\beta,Q} ( \Phi^{\pm}_{\alpha,P}(C) ) ). \]
\end{theorem}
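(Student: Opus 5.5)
The plan is to follow the strategy of \cite[Proposition 2]{fulautam}. We combine two facts. First, the composite map $\Psi=\Phi^{+}_{\beta,Q}\circ\Phi^{\pm}_{\alpha,P}$ should send $\begin{bmatrix} A & X\\ X^* & C\end{bmatrix}$ to a PPT block matrix in $M_2(M_m)^+$. Second, a PPT $2\times 2$ block matrix $\begin{bmatrix} M & Y\\ Y^* & N\end{bmatrix}$ has a known singular value–geometric mean inequality $\sigma_j(Y)\leq \lambda_j(M\sharp N)$, which we take from the literature used in \cite{fulautam}.

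The first step is to show that $\Phi^{\pm}_{\alpha,P}$ is $2$-PPT under the stated hypotheses on $\alpha$. For the $+$ sign this is Corollary \ref{phi+_kppt} with $k=2$ (using $m,n\geq 2$): the threshold is $\sigma_1(P)\sigma_2(P)$ when $\textup{rank}(P)\geq 2$, and $0$ otherwise. In both cases $\alpha\geq \sigma_1(P)\sigma_2(P)$ suffices, since $\sigma_2(P)=0$ when $\textup{rank}(P)\le1$. For the $-$ sign it is Corollary \ref{phi-_kppt} with $k=2$, whose threshold is exactly $\sigma_1(P)^2+\sigma_2(P)^2=\alpha^-$. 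Hence the matrix $\begin{bmatrix}\Phi^\pm_{\alpha,P}(A) & \Phi^\pm_{\alpha,P}(X)\\ \Phi^\pm_{\alpha,P}(X)^* & \Phi^\pm_{\alpha,P}(C)\end{bmatrix}$ is PPT.

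The second step is to apply $\Phi^+_{\beta,Q}$ and show the result remains PPT. Positivity of the image requires $2$-positivity of $\Phi^+_{\beta,Q}$, which holds for $\beta\geq0$ by Corollary \ref{phi+_kpos} (here $m\ge2$). For the partial transpose, the natural route is to apply $I_2\otimes\Phi^+_{\beta,Q}$ to the partial transpose $[A'_{ji}]$ of the image $[A'_{ij}]$ from the first step. By linearity, $(I_2\otimes\Phi)([A'_{ji}])=[\Phi(A'_{ji})]$, which is the partial transpose of $[\Phi(A'_{ij})]$. So $2$-positivity alone preserves PPT-ness, and $\beta\ge\frac12\sigma_1(Q)^2\ge0$ is more than enough. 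We would record that the stronger hypothesis on $\beta$ is presumably what the authors need in order to invoke Theorem \ref{AM_plus}, or an analogous estimate, along the way.

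The third step, which is the main obstacle, is the inequality $\sigma_j(Y)\le\lambda_j(M\sharp N)$ for a PPT block $\begin{bmatrix} M & Y\\ Y^* & N\end{bmatrix}$ with $M,N\in M_m^+$. We would first try to cite it: it is known (Lin, Hiroshima-type results) that PPT implies $\begin{bmatrix} M\sharp N & Y\\ Y^*& M\sharp N\end{bmatrix}\ge0$. The argument runs as follows. Perturb by $\epsilon I$ to make $M,N$ positive definite. Then both $\begin{bmatrix} M & Y\\ Y^*& N\end{bmatrix}\ge 0$ and $\begin{bmatrix} M & Y^*\\ Y& N\end{bmatrix}\ge0$ give $Y^*M^{-1}Y\le N$ and $YM^{-1}Y^*\le N$ (after swapping roles, $YN^{-1}Y^*\le M$). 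The maximal characterization of $M\sharp N$ and the symmetry $A\sharp C=C\sharp A$ then yield the enlarged positive block. A positive block $\begin{bmatrix} G & Y\\ Y^* & G\end{bmatrix}$ gives $\sigma_j(Y)\le\lambda_j(G)$, since $Y=G^{1/2}KG^{1/2}$ with $\|K\|\le1$. Finally, letting $\epsilon\to0^+$ and using continuity of eigenvalues and the limit definition of $\sharp$ completes the argument. Applying this with $M=\Psi(A)$, $N=\Psi(C)$, $Y=\Psi(X)$ gives the claim.
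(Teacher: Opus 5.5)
Your first two steps are fine. $\Phi^{\pm}_{\alpha,P}$ is $2$-PPT by Corollaries \ref{phi+_kppt} and \ref{phi-_kppt}, with $\sigma_2(P)=0$ covering the rank-one case. Since $(I_2\otimes\Phi)([A_{ij}]^\tau)=([\Phi(A_{ij})])^\tau$, any $2$-positive map preserves PPT-ness. The enlarged block $\begin{bmatrix} M\sharp N & Y\\ Y^* & M\sharp N\end{bmatrix}\geq 0$ for a PPT block is also true. Your Schur-complement sketch of it is loose, but this is exactly what the paper obtains from \cite[Lemmas 1.1 and 3.1]{ando}.

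The gap is the final deduction. Positivity of $\begin{bmatrix} G & Y\\ Y^* & G\end{bmatrix}$ does \emph{not} give $\sigma_j(Y)\le\lambda_j(G)$ for each $j$, and the inequality $\sigma_j(Y)\le\lambda_j(M\sharp N)$ for PPT blocks that you want to cite is false. Take $0<\epsilon<1$, $G=\textup{diag}(1,\epsilon)$, and $Y=G^{1/2}KG^{1/2}=\sqrt{\epsilon}\begin{bmatrix}0&1\\1&0\end{bmatrix}$ with $K$ the swap unitary. Then $\begin{bmatrix} G&Y\\ Y&G\end{bmatrix}\geq 0$, it is PPT because $Y=Y^*$, and $G\sharp G=G$. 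Yet $\sigma_2(Y)=\sqrt{\epsilon}>\epsilon=\lambda_2(G)$. From $Y=G^{1/2}KG^{1/2}$ with $\|K\|\le 1$ one only gets the weak majorization $\sigma(Y)\prec_w\lambda(G)$, i.e., the unitarily invariant norm version for PPT blocks, not an index-by-index bound. So pushing PPT through the composite $\Phi^+_{\beta,Q}\circ\Phi^{\pm}_{\alpha,P}$ and then invoking a general PPT lemma cannot prove the theorem. The fact that your argument never used $\beta\ge\frac12\sigma_1(Q)^2$ is the symptom.

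The missing idea is that the index-wise inequality comes from the trace term of $\Phi^+_{\beta,Q}$, so the order of operations matters. Form the geometric-mean block one level down, with $M=\Phi^{\pm}_{\alpha,P}(A)$, $N=\Phi^{\pm}_{\alpha,P}(C)$, $Y=\Phi^{\pm}_{\alpha,P}(X)$. Then apply Theorem \ref{AM_plus} to $\Phi^+_{\beta,Q}$ and the positive block $\begin{bmatrix} M\sharp N & Y\\ Y^* & M\sharp N\end{bmatrix}$. This is exactly where $\beta\ge\frac12\sigma_1(Q)^2$ enters, and it yields $\sigma_j(\Phi^+_{\beta,Q}(Y))\le\sigma_j(\Phi^+_{\beta,Q}(M\sharp N))$.

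In the example above with $Q=I_2$ and $\beta=\frac12$, one has $\Phi^+_{\beta,Q}(Y)=Y$, while $\lambda_2(\Phi^+_{\beta,Q}(G))=\frac12+\frac32\epsilon\ge\sqrt{\epsilon}$. So the trace term absorbs the deficit.

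Finally, since $\Phi^+_{\beta,Q}$ is positive, Ando's inequality $\Phi^+_{\beta,Q}(M\sharp N)\le\Phi^+_{\beta,Q}(M)\sharp\Phi^+_{\beta,Q}(N)$ \cite[Theorem 4.1.5]{bhatia2} together with Weyl monotonicity gives the stated bound. With this route, your step 2 (PPT-preservation under $\Phi^+_{\beta,Q}$) becomes unnecessary.
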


\begin{proof}
	Since $\alpha \geq \alpha^\pm$, Corollaries \ref{phi+_kppt}-\ref{phi-_kppt} guarantee that the maps $\Phi^{\pm}_{\alpha,P}$ are 2-PPT.
	Hence, the matrix 
	\[  \begin{bmatrix} \Phi^{\pm}_{\alpha,P}(A) & \Phi^{\pm}_{\alpha,P}(X) \\[0.5em] \Phi^{\pm}_{\alpha,P}(X)^* & \Phi^{\pm}_{\alpha,P}(C) \end{bmatrix} \] is PPT. By \cite[Lemma 1.1]{ando}, 
	\[ \begin{bmatrix} \Phi^{\pm}_{\alpha,P}(C) & \Phi^{\pm}_{\alpha,P}(X) \\[0.5em] \Phi^{\pm}_{\alpha,P}(X)^* & \Phi^{\pm}_{\alpha,P}(A) \end{bmatrix} \in M_2(M_m)^+ \]
	and so by \cite[Lemma 3.1]{ando},
	\[ \begin{bmatrix} \Phi^{\pm}_{\alpha,P}(A) \sharp \Phi^{\pm}_{\alpha,P}(C) & \Phi^{\pm}_{\alpha,P}(X) \\[0.5em] \Phi^{\pm}_{\alpha,P}(X)^* & \Phi^{\pm}_{\alpha,P}(C) \sharp \Phi^{\pm}_{\alpha,P}(A) \end{bmatrix} \in M_2(M_m)^+. \]
	Let $j \in \{1,\ldots,m\}$.
	Since $\beta \geq \dfrac{1}{2} \sigma_1(Q)^2$, Theorem \ref{AM_plus} applied to the map $\Phi^+_{\beta,Q}$ and the above matrix implies 
	\begin{align*}
		2\sigma_j ( \Phi^{+}_{\beta,Q} ( \Phi^{\pm}_{\alpha,P}(X) ) ) &\leq \sigma_j ( \Phi^{+}_{\beta,Q} ( \Phi^{\pm}_{\alpha,P}(A) \sharp \Phi^{\pm}_{\alpha,P}(C) + \Phi^{\pm}_{\alpha,P}(C) \sharp \Phi^{\pm}_{\alpha,P}(A) ) ) \\
						&= 2 \sigma_j ( \Phi^{+}_{\beta,Q} ( \Phi^{\pm}_{\alpha,P}(A) \sharp \Phi^{\pm}_{\alpha,P}(C) ) )
 	\end{align*}
	using the symmetry of the geometric mean.
	Now, the map $\Phi^{+}_{\beta,Q}$ is positive by Corollary \ref{phi+_kpos}.
	Moreover, using \cite[Theorem 4.1.5]{bhatia2}, we have
	\begin{align*}
		\sigma_j ( \Phi^{+}_{\beta,Q} ( \Phi^{\pm}_{\alpha,P}(A) \sharp \Phi^{\pm}_{\alpha,P}(C) ) ) 
						&= \lambda_j ( \Phi^{+}_{\beta,Q} ( \Phi^{\pm}_{\alpha,P}(A) \sharp \Phi^{\pm}_{\alpha,P}(C) ) ) \\
						&\leq \lambda_j( \Phi^{+}_{\beta,Q} ( \Phi^{\pm}_{\alpha,P} (A) )  \sharp \Phi^{+}_{\beta,Q} ( \Phi^{\pm}_{\alpha,P}(C) ) ).
	\end{align*}
	This proves the desired inequality.
\end{proof}

Theorem \ref{GM} generalizes the inequalities in \cite[Proposition 3]{fulautam}. In particular, we have the following result.

\begin{corollary}
	Let $\Phi^{\pm}_{\gamma,P}$ be as in Definition \ref{phi_pm} and $\begin{bmatrix} A & X \\ X^* & C \end{bmatrix} \in M_2(M_n)^+$.
	Suppose that $P \in M_{m,n}$ such that $P^*P = I_n$ and $n \geq 2$.
	\begin{enumerate}[(i)]
		\item If $\gamma\geq\frac{m+3}{2}$, then for $j \in \{ 1,\ldots,m \}$,
	\[ \sigma_j(\Phi^{+}_{\gamma,P}(X)) \leq \lambda_j(\Phi^{+}_{\gamma,P}(A) \sharp \Phi^{+}_{\gamma,P}(C)).\]
	
	\item If $\gamma\geq m+\frac{3}{2}$, then for $j \in \{ 1,\ldots,m \}$,
	\[ \sigma_j(\Phi^{-}_{\gamma,P}(X)) \leq \lambda_j(\Phi^{-}_{\gamma,P}(A) \sharp \Phi^{-}_{\gamma,P}(C)).\]
	\end{enumerate}
\end{corollary}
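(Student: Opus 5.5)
The corollary should follow from Theorem \ref{GM} by choosing $Q$ so that the composition $\Phi^{+}_{\beta,Q}\circ\Phi^{\pm}_{\alpha,P}$ equals $\Phi^{\pm}_{\gamma,P}$. The constraint $m\geq 2$ needed in Theorem \ref{GM} comes for free: since $P^*P=I_n$ and $n\geq 2$, we have $m\geq n\geq 2$. Also $\sigma_1(P)=\sigma_2(P)=1$, so $\alpha^+=1$ and $\alpha^-=2$.

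The natural choice is $Q=0\in M_m$ together with $\beta=1/m$. This gives
\[\Phi^{+}_{1/m,0}(Y)=\tfrac1m\textup{tr}(Y)I_m,\]
which is not the identity, so a composition with it will not reproduce $\Phi^\pm_{\gamma,P}$ directly. The better route is to keep $Q=I_m$ and compute the composition explicitly. For $Y\in M_m$ we have $\Phi^{+}_{\beta,I_m}(Y)=\beta\textup{tr}(Y)I_m+Y$. Using $\textup{tr}(PXP^*)=\textup{tr}(X)$, we get $\textup{tr}(\Phi^{\pm}_{\alpha,P}(X))=(m\alpha\pm1)\textup{tr}(X)$. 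Therefore
\[\Phi^{+}_{\beta,I_m}(\Phi^{\pm}_{\alpha,P}(X))=\bigl(\alpha+\beta(m\alpha\pm1)\bigr)\textup{tr}(X)I_m\pm PXP^*=\Phi^{\pm}_{\gamma,P}(X),\]
where $\gamma=\alpha+\beta(m\alpha\pm1)$.

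Theorem \ref{GM} requires $\beta\geq\frac12\sigma_1(I_m)^2=\frac12$, $\alpha\geq\alpha^+=1$ in the $+$ case, and $\alpha\geq\alpha^-=2$ in the $-$ case. Take $\beta=\frac12$. Then
\[\gamma=\alpha+\tfrac12(m\alpha\pm1)=\tfrac{(m+2)\alpha\pm1}{2}.\]
\begin{itemize}
\item In the $+$ case, solving gives $\alpha=\frac{2\gamma-1}{m+2}$. The requirement $\alpha\geq1$ becomes $\gamma\geq\frac{m+3}{2}$, which matches (i).
\item In the $-$ case, solving gives $\alpha=\frac{2\gamma+1}{m+2}$. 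The requirement $\alpha\geq2$ becomes $\gamma\geq\frac{2m+3}{2}=m+\frac32$, which matches (ii).
\end{itemize}
So, given $\gamma$ satisfying the relevant hypothesis, define $\alpha$ by these formulas and check $\alpha\geq\alpha^\pm$.

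Next we need a second identity: the right-hand side of Theorem \ref{GM} must become $\lambda_j(\Phi^\pm_{\gamma,P}(A)\sharp\Phi^\pm_{\gamma,P}(C))$. This holds because the composition identity above applies to every argument, in particular to $A$ and $C$. Applying Theorem \ref{GM} with $Q=I_m$, $\beta=\frac12$ and this $\alpha$ then gives both inequalities.

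The only delicate point is the bookkeeping of the trace term, $\textup{tr}(PXP^*)=\textup{tr}(P^*PX)=\textup{tr}(X)$. Since everything else follows from the algebra above, I expect no real obstacle.
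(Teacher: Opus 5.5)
Your proposal is correct and follows the paper's proof: both write $\Phi^{\pm}_{\gamma,P}=\Phi^{+}_{\beta,I_m}\circ\Phi^{\pm}_{\alpha,P}$ with $\gamma=\alpha\beta m+\alpha\pm\beta$ (using $\textup{tr}(PZP^*)=\textup{tr}(Z)$), observe $m\geq n\geq 2$ and $\sigma_1(P)=\sigma_2(P)=1$ so that $\alpha^+=1$ and $\alpha^-=2$, and then invoke Theorem \ref{GM} with $Q=I_m$. Your explicit choice $\beta=\tfrac12$ with $\alpha=\frac{2\gamma\mp1}{m+2}$ makes explicit a step the paper leaves implicit, namely that every $\gamma$ satisfying the hypothesis comes from admissible $\alpha$ and $\beta$; the discarded $Q=0$ detour is harmless.
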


\begin{proof}
	Since $P^*P = I_n$, observe that for any $Z \in M_n$ 
	\begin{align*}
		(\Phi^{+}_{\beta,I_m} ( \Phi^{\pm}_{\alpha,P}(Z)) &= \Phi^{+}_{\beta,I_m} ( \alpha \text{tr} (Z) I_m \pm PZP^* ) \\
						&= \alpha \text{tr} (Z)\Phi^{+}_{\beta,I_m}(I_m) \pm \Phi^{+}_{\beta,I_m}(PZP^*) \\
						&= \alpha \text{tr} (Z) [\beta \text{tr} (I_m) I_m + I_m] \pm [\beta \text{tr}( PZP^*) I_m + PZP^*] \\
						&= (\alpha \beta m + \alpha \pm \beta) \text{tr} (Z) I_m \pm P Z P^* \\
						&= \Phi^{\pm}_{\gamma,P}(Z)
	\end{align*}
    where $\gamma=\alpha \beta m + \alpha \pm \beta.$ Note that $m \geq n \geq 2$ and $\sigma_1(P) = \sigma_2(P) = 1$. 
	If we set $\alpha^+ = 1$ and $\alpha^- = 2$ and consider $\alpha \geq \alpha^\pm$ and $\beta \geq \frac{1}{2}$, then these give the two assumptions $\gamma\geq \frac{m+3}{2}$ and $\gamma\geq m+\frac{3}{2}$ corresponding to $\Phi^\pm_{\gamma, P}.$ Now Theorem \ref{GM} ensures that
	\[ \sigma_j ( \Phi^{+}_{\beta,I_m} ( \Phi^{\pm}_{\alpha,P}(X) ) ) \leq \lambda_j( \Phi^{+}_{\beta,I_m} ( \Phi^{\pm}_{\alpha,P} (A) )  \sharp \Phi^{+}_{\beta,I_m} ( \Phi^{\pm}_{\alpha,P}(C) ) )\]
	for $j \in \{1,\ldots,m\}$. Rewriting the expression in terms of $\gamma$ implies the inequalities.
\end{proof}







\bigskip
{\bf Acknowledgments.} 
The research of the first author was supported by University of the Philippines Diliman, Natural Sciences Research Institute Project MAT-25-1-01.


\end{document}